\numberwithin{equation}{section}
\newtheorem{Theorem}{Theorem}[section]
\newtheorem{Lemma}[Theorem]{Lemma}
\newtheorem{Proposition}[Theorem]{Proposition}
 { \theoremstyle{definition}
\newtheorem{Definition}[Theorem]{Definition}
\newtheorem{Remark}[Theorem]{Remark} }
\begin{document}
\allowdisplaybreaks

\newcommand{\arXivNumber}{2110.07042}

\renewcommand{\PaperNumber}{113}

\FirstPageHeading

\ShortArticleName{Orthogonal Polynomial Stochastic Duality Functions for Multi-Species SEP$(2j)$}

\ArticleName{Orthogonal Polynomial Stochastic Duality Functions\\ for Multi-Species SEP$\boldsymbol{(2j)}$ and Multi-Species IRW}

\Author{Zhengye ZHOU}

\AuthorNameForHeading{Z.~Zhou}

\Address{Department of Mathematics, Texas A$\&$M University, College Station, TX 77840, USA}
\Email{\href{mailto:zyzhou@tamu.edu}{zyzhou@tamu.edu}}

\ArticleDates{Received October 16, 2021, in final form December 24, 2021; Published online December 26, 2021}

\Abstract{We obtain orthogonal polynomial self-duality functions for multi-species version of the symmetric exclusion process (SEP$(2j)$) and the independent random walker process (IRW) on a finite undirected graph. In each process, we have $n>1$ species of particles. In addition, we allow up to $2j$ particles to occupy each site in the multi-species SEP$(2j)$. The duality functions for the multi-species SEP$(2j)$ and the multi-species IRW come from unitary intertwiners between different $*$-representations of the special linear Lie algebra $\mathfrak{sl}_{n+1}$ and the Heisenberg Lie algebra $\mathfrak{h}_n$, respectively. The analysis leads to multivariate Krawtchouk polynomials as orthogonal duality functions for the multi-species SEP$(2j)$ and homogeneous products of Charlier polynomials as orthogonal duality functions for the multi-species IRW.}

\Keywords{orthogonal duality; multi-species SEP$(2j)$; multi-species IRW}

\Classification{60K35}

\section{Introduction}
In recent years, stochastic duality has been used as a powerful tool in the study of stochastic processes (see, e.g., \cite{Borodin_2014,corwin2016asepqj,kuan2019stochastic,zhou2020hydrodynamic}). More recently, orthogonal stochastic dualities were derived for some classical interacting particle systems. For instance, the independent random walker process (IRW) is self-dual with respect to Charlier polynomials \cite{Carinci_2019,Groenevelt_2018}, and the symmetric exclusion process (SEP$(2j)$) is self-dual with respect to Krawtchouk polynomials \cite{Carinci_2019,Groenevelt_2018}. Orthogonal polynomial duality functions turn out to be useful in applications as they form a convenient orthogonal basis in a suitable space of the systems’ observables. There are many applications of orthogonal duality functions (see, e.g., \cite{Ayala_2018,ayala2020higher,chen2021higher}). In a series of previous works, several ways to find orthogonal dualities were introduced. In \cite{franceschini2017stochastic}, the two-terms recurrence relations of orthogonal polynomials were used, method via generating functions was used in \cite{2018}, while Lie algebra representations and unitary intertwiners were used in~\cite{Groenevelt_2018}. In addition, two more approaches were described in \cite{Carinci_2019}. The first approach is based on unitary symmetries, another one is based on scalar products of classical duality functions. In this paper, we make use of the method introduced in~\cite{Groenevelt_2018}.

We first study a model of interacting particle systems with symmetric jump rates. The multi-species symmetric exclusion process is a generalization of the SEP$(2j)$ to multi-species systems, where we have up to $2j\in\mathbb{N}$ particles allowed for each site and we have $n>1$ species particles in the system. It is worth mentioning that the multi-species SEP$(2j)$ we consider is closely related to other multi-species (multi-color) exclusion processes studied over the past decades. For example, when $j=\frac{1}{2}$, it degenerates to a special case of the multi-color exclusion
processes studied in \cite{Caputo2008ONTS,Dermoune2008SpectralGF}. This model also arises naturally as a special case of the multi-species ASEP$(q,j)$ studied in \cite{Kuan_2017} when $q=1$. Given the fact that the single-species SEP$(2j)$ is self-dual with respect to Krawtchouk polynomials, it's expected that similar results could be found for the multi-species SEP$(2j)$. We prove that the multi-species SEP$(2j)$ is self-dual with multivariate Krawtchouk polynomials as duality functions.

Another process we study is the multi-species independent random walker, which can be thought of as $n>1$ independent copies of IRW evolving simultaneously. Although it is straightforward to obtain the duality functions using the independence property, it is still interesting to show how the duality functions arise from representations of the nilpotent Heisenberg Lie algebra $\mathfrak{h}_n$.

The organization of this paper is as follows. In Section~\ref{section2} we give an overview of the method that we use to construct orthogonal duality functions. In Section~\ref{section3} we obtain the orthogonal duality functions for the multi-species SEP$(2j)$ and in Section~\ref{section4} for the multi-species IRW.

\section{Background}\label{section2}
In this section we describe the method to obtain the orthogonal dualities which was introduced in \cite{Groenevelt_2018}. We start by recalling the definition of stochastic duality.

\begin{Definition}
Two Markov processes $\mathfrak{s}_t$ and $\mathfrak{s}_t'$ on state spaces $\mathfrak{S}$ and $\mathfrak{S}'$ are dual with respect to duality function $D(\cdot,\cdot)$ on $\mathfrak{S}\times\mathfrak{S}'$ if
\begin{gather*}
 E_{\mathfrak{s}}[D(\mathfrak{s}_t,\mathfrak{s}')]=E'_{\mathfrak{s}'}[D(\mathfrak{s},\mathfrak{s}_t')] \qquad \text{for all} \ \mathfrak{s} \in \mathfrak{S},\ \mathfrak{s}'\in \mathfrak{S}', \ \text{and} \ t>0,
\end{gather*}
where $E_{\mathfrak{s}}$ denotes expectation with respect to the law of $\mathfrak{s}_t$ with $\mathfrak{s}_0=\mathfrak{s}$ and similarly for $E'_{\mathfrak{s}'}$. If~$\mathfrak{s}_t'$ is a copy of $\mathfrak{s}_t$, we say that the process $\mathfrak{s}_t$ is self-dual.

In most relevant examples, duality could also be stated at the level of
Markov generators. We say that generator $L_1$ is dual to $L_2$ with respect to duality function $D(\cdot,\cdot)$ if for all~$\mathfrak{s}$ and~$\mathfrak{s}'$,
\begin{gather*}
 [L_1D(\cdot,\mathfrak{s}')](\mathfrak{s})=[L_2D(\mathfrak{s},\cdot)](\mathfrak{s}').
\end{gather*}
If $L_1=L_2$, we have self-duality.
\end{Definition}

Let $\mathfrak{g}=(\mathfrak{g},[\cdot,\cdot])$ be a complex Lie algebra with a $*$-structure, i.e., there exists an involution $*\colon X\xrightarrow[]{}X^*$ such that for any $X,Y\in \mathfrak{g}$, $a,b\in \mathbb{C}$,
\begin{gather*}
 (aX+bY)^*=\overline{a}X^*+\overline{b}Y^*,\qquad [X,Y]^*=[Y^*,X^*].
\end{gather*}
Let $U(\mathfrak{g})$ be the universal enveloping algebra of~$\mathfrak{g}$.

Given a Hilbert space $(H,\langle \cdot,\cdot\rangle)$
and a representation $\rho$ of $\mathfrak{g}$ on $H$, we call $\rho$ a $*$-representation if for any $f,g\in H$ and any $X\in\mathfrak{g} $,
\begin{gather*}
\langle \rho(X)f,g\rangle =\langle f,\rho(X^*)g\rangle .
\end{gather*}
Suppose we have state spaces $\Omega_1$ and $\Omega_2$ of configurations on $L$ sites given by $\Omega_1=E_1\times\dots\times E_L$ and $\Omega_2=F_1\times\dots\times F_L$. Let $\mu=\mu_1\otimes\dots\otimes \mu_L$ and $\nu=\nu_1\otimes\dots\otimes \nu_L$ be product measures on~$\Omega_1$ and~$\Omega_2$.

For $1\le x\le L$, let $\rho_x$ and $\sigma_x$ be unitarily equivalent $*$-representations of a Lie algebra $\mathfrak{g}$ on $ L^2(E_x,\mu_x)$ and $ L^2(F_x,\nu_x)$, respectively. Then $\rho=\rho_1\otimes\dots\otimes \rho_L$ and $\sigma=\sigma_1\otimes\dots\otimes\sigma_L$ are $*$-representations of $\mathfrak{g}$. We assume that the corresponding unitary intertwiner $\Lambda_x\colon L^2(E_x,\mu_x)\xrightarrow{}L^2(F_x,\nu_x)$ has the following form:
\begin{gather*}
 (\Lambda_x f)(z_2)=\int_{E_x} f(z_1)K_x(z_1,z_2)\,{\rm d}\mu_x(z_1),\qquad \text{for $\nu_x$-almost all $z_2\in F_x$},
\end{gather*}
for some kernel $K_x\in L^2(E_x\times F_x,\mu_x\otimes \nu_x)$ satisfying the relation
\begin{gather*}
 [\rho_x(X^*)K_x(\cdot,z_2)](z_1)=[\sigma_x(X)K_x(z_1,\cdot)](z_2), \qquad (z_1,z_2)\in E_x\times F_x,\quad X\in\mathfrak{g}.
\end{gather*}

With all the above structures, the following theorem provides a way to construct duality functions.
\begin{Theorem}[\cite{Groenevelt_2018}]\label{th:2.1}
Suppose $L_1$ and $L_2$ are self-adjoint operators on $L^2(\Omega_1,\mu)$ and $L^2(\Omega_2,\nu)$, respectively, given by
\begin{gather*}
 L_1=\rho(Y),\qquad L_2=\sigma(Y),
\end{gather*}
for some self-adjoint $Y\in U(\mathfrak{g})^{\otimes L}$. Then $L_1$ and $L_2$ are in duality, with duality function
\begin{gather*}
 D(z_1,z_2)=\prod_{x=1}^L K_x(z_{1x},z_{2x}), \qquad z_1=(z_{11},\dots,z_{1L})\in \Omega_1,\qquad z_2=(z_{21},\dots,z_{2L})\in \Omega_2.
\end{gather*}
\end{Theorem}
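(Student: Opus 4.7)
The plan is to verify the generator-level duality identity $[L_1 D(\cdot,z_2)](z_1) = [L_2 D(z_1,\cdot)](z_2)$ directly, by bootstrapping the single-site kernel intertwining relation from $\mathfrak{g}$ up to the full algebra $U(\mathfrak{g})^{\otimes L}$ and then specializing to the self-adjoint element $Y$.

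First I would extend the hypothesized kernel relation
\begin{gather*}
 [\rho_x(X^*)K_x(\cdot,z_2)](z_1)=[\sigma_x(X)K_x(z_1,\cdot)](z_2)
\end{gather*}
from $X\in\mathfrak{g}$ to arbitrary $Z\in U(\mathfrak{g})$ by induction on word length. For a product $Z=X_1X_2$ with $X_i\in\mathfrak{g}$ I use $(X_1X_2)^*=X_2^*X_1^*$, multiplicativity of $\rho_x$ and $\sigma_x$ on $U(\mathfrak{g})$, and the crucial observation that $\rho_x$ acts only on the first argument of $K_x$ while $\sigma_x$ acts only on the second, so operators of the two types commute on functions of $(z_1,z_2)$. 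Peeling one generator off each side at a time then yields the identity for $Z$, and $\mathbb{C}$-linearity extends it to all of $U(\mathfrak{g})$.

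Next I would lift to the $L$-fold tensor product. Writing $K(z_1,z_2)=\prod_{x=1}^L K_x(z_{1x},z_{2x})$, the operator $\rho((Z_1\otimes\cdots\otimes Z_L)^*)=\rho_1(Z_1^*)\otimes\cdots\otimes\rho_L(Z_L^*)$ acts site by site, and since each factor $\rho_x(Z_x^*)$ touches only the kernel $K_x$ in its own variable, the product kernel inherits the intertwining property
\begin{gather*}
 [\rho((Z_1\otimes\cdots\otimes Z_L)^*)K(\cdot,z_2)](z_1)=[\sigma(Z_1\otimes\cdots\otimes Z_L)K(z_1,\cdot)](z_2).
\end{gather*}
Linearity then extends this to every $Y\in U(\mathfrak{g})^{\otimes L}$, and the self-adjointness assumption $Y^*=Y$ turns $\rho(Y^*)$ into $L_1$ and $\sigma(Y)$ into $L_2$, giving the generator duality identity with $D=K$.

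The main subtlety I expect is the tensor-product bookkeeping: one must verify that the action of $\rho_x$ on the $x$-th kernel factor really commutes with all other $\rho_y$ and with every $\sigma_y$, a fact that rests on the product structure of $K$ and on the decoupling of the variables $z_{1x}$ and $z_{2x}$ across sites. There are also standard domain questions tied to the possible unboundedness of $L_1$ and $L_2$; in practice they are handled by assuming the kernels $K_x$ are regular enough that the formal identities above are literal $L^2$ (or even pointwise) equalities on the configurations relevant in Sections~\ref{section3} and~\ref{section4}.
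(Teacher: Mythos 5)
Your argument is correct and is essentially the proof of this result given in the cited source \cite{Groenevelt_2018} (the present paper imports the theorem without reproving it): extend the kernel intertwining relation from $\mathfrak{g}$ to $U(\mathfrak{g})$ by peeling off generators and using that $\rho$-type and $\sigma$-type operators act on disjoint variables, tensor up over sites via the product structure of $D$, and invoke $Y^*=Y$ to identify $\rho(Y^*)$ with $L_1$. In the discrete, finite-dimensional settings actually used in Sections~\ref{section3} and~\ref{section4} the commutation and domain issues you flag are vacuous, so no further work is needed.
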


\section[Multi-species SEP(2j) and Lie algebra sl\_\{n+1\}]{Multi-species SEP$\boldsymbol{(2j)}$ and Lie algebra $\boldsymbol{\mathfrak{sl}_{n+1}}$}\label{section3}

In this section, we study the multi-species version of the SEP$(2j)$ with $2j\in\mathbb{N}$ on a finite undirected graph $G=(V,E)$, where $V=\{1,\dots,L\}$ with $L\in\mathbb{N}$ and $L>2$ is the set of sites (vertices) and $E$ is the set of edges. In what follows, we write site $x\in G$ instead of mentioning~$V$ for ease of notation.

The state space $\mathcal{S}(n,2j,G)$ of particle configurations consists of variables $\xi=\big(\xi_i^x\colon 0\le i\le n$, $x\in G\big)$, where $\xi_i^x$ denotes the number of particles of species $i$ at site $x$, and
\begin{gather*}
 \xi^x=\big(\xi^x_0,\dots,\xi^x_n\big)\in \Omega_{2j}:=\left\{\xi=(\xi_0,\dots,\xi_n)\,\Bigg|\,\sum_{i=0}^n \xi_i=2j,\, \xi_i\ge 0\right\}
\end{gather*}
for any site $x\in G $. One can think of $\xi^x_0$ as the number of holes at site~$x$.

\begin{Definition}\label{def:3.1}
The generator of the multi-species SEP$(2j)$ on a finite undirected graph $G=(V,E)$ is given by
\begin{gather}
\begin{split}
 &\mathcal{L}f(\xi)=\sum_{\text{edge}\{x,y\}\in E} \mathcal{L}_{x,y} f(\xi), \\
& \mathcal{L}_{x,y}f(\xi)=\sum_{0\le k<l\le n}\xi_l^{x}\xi_k^{y}\big[f\big(\xi^{x,y}_{l,k}\big)-f(\xi)\big]+\xi_l^{y}\xi^{x}_k\big[f\big(\xi^{y,x}_{l,k}\big)-f(\xi)\big],
\end{split}\label{eq:6}
\end{gather}
where $\xi_{l,k}^{x,y}$ denotes the particle configuration obtained by switching a particle of the $l^{\rm th}$ species at site $x$ with a particle of the $k^{\rm th}$ species at site $y$ if $\xi_{l,k}^{x,y}\in\mathcal{S}(n,2j,G)$.
\end{Definition}
 Note that when $n=1$, this process reduces to the single-species SEP(2j) defined in \cite{Giardin__2009}.

Suppose $p=(p_0,\dots,p_n)$ is a probability distribution, the multinomial distribution on $\Omega_{2j}$ is defined as
\begin{gather*}
 w_p(\xi)=\binom{2j}{\xi}\prod_{i=0}^np_i^{\xi_i},
\end{gather*}
where $ \binom{2j}{\xi}$ denotes the multinomial coefficient $\frac{(2j)!}{\prod_{i=0}^n\xi_i!}$. Following a simple detailed balance computation, we can show that the product measure with marginals $w_p(\xi)$ for any fixed $p$ being a distribution is a reversible measure of the multi-species SEP$(2j)$, i.e., $\otimes_G w_p$ is a reversible measure of the multi-species SEP$(2j)$ when $p$ is the same for all sites.

\subsection[Multivariate Krawtchouk polynomials and Lie algebra sl\_\{n+1\}]{Multivariate Krawtchouk polynomials and Lie algebra $\boldsymbol{\mathfrak{sl}_{n+1}}$}\label{section3.1}

First, we introduce the $n$-variable Krawtchouk polynomials defined by Griffiths \cite{https://doi.org/10.1111/j.1467-842X.1971.tb01239.x}. We shall adopt the notation of Iliev~\cite{iliev_2012} in the following.

\begin{Definition}\label{def:1}
Let $\mathcal{K}_n$ be the set of 4-tuples $\big(\nu,P,\hat{P},U\big)$ such that
 $P$, $\hat{P}$, $U$ are $(n+1)\times(n+1)$ complex matrices satisfying the following conditions:
\begin{enumerate}\itemsep=0pt
\item[(1)] $P=\operatorname{diag}(p_0,\dots,p_n)$, $\hat{P}=\operatorname{diag}(\hat{p}_0,\dots,\hat{p}_n)$ and $p_0=\hat{p}_0=\frac{1}{\nu}\neq 0$,
\item[(2)] $U=(u_{kl})_{0\le k,l\le n}$ with $U_{k0}=U_{0k}=1$ for all $0\le k \le n$,
\item[(3)] $\nu PU\hat{P}U^{\rm T}=I_{n+1}$.
\end{enumerate}
\end{Definition}

It follows from the above definition that $p=(p_0,\dots,p_n)$ and $\hat{p}=(\hat{p}_0,\dots,\hat{p}_n)$ satisfy that $\sum_{k=0}^n p_k=\sum_{k=0}^n \hat{p}_k=1$ and $p_k,\hat{p}_k\neq 0$ for any $k$.

For all points $\kappa\in \mathcal{K}_n$, Griffith constructed multivariate Krawtchouk polynomials using a~generating function as follows.
 \begin{Definition}[\cite{https://doi.org/10.1111/j.1467-842X.1971.tb01239.x}]
 For $\xi,\eta\in \Omega_{2j}$ and $\kappa\in \mathcal{K}_n$, the multivariate Krawtchouk polynomial $K(\xi,\eta,\kappa,j)$ is defined by
 \begin{gather*}
 \sum_{\xi\in\Omega_{2j}}\binom{2j}{\xi}K(\xi,\eta,\kappa,j)z_1^{\xi_1}\cdots z_n^{\xi_n}=\prod_{k=0}^n\bigg(1+\sum_{l=1}^nu_{kl}z_l\bigg)^{\eta_k}.
 \end{gather*}
 \end{Definition}

Although it's not obvious to tell from the generating function, $K(\xi,\eta,\kappa,j)$ depends on~$P$ and~$\hat{P}$ because the matrix $U\in\kappa$ satisfies the condition~(3) in Definition~\ref{def:1}.

In what follows, we fix a 4-tuple $\kappa\in \mathcal{K}_n$ as in Definition~\ref{def:1}, we also write $K(\xi,\eta,\kappa,j) $ as~$K(\xi,\eta)$ for simplicity.

In \cite{iliev_2012}, Iliev interpreted multivariate Krawtchouk polynomials with representations of the Lie algebra~$\mathfrak{sl}_{n+1}$. We recall some of the essential results. Let's start by introducing some basic notations.
Let $z_0,\dots,z_n$ be mutually commuting variables, we set $z=(z_0,\dots,z_n)$. For each $\xi=(\xi_0,\dots,\xi_n)\in\Omega_{2j}$, we denote
$z^\xi=z_0^{\xi_0}z_1^{\xi_1}\cdots z_n^{\xi_n}$ and $\xi!=\xi_0!\cdots\xi_n!$. Also define $V_{2j}=\operatorname{span}\big\{z^\xi|\xi\in\Omega_{2j}\big\}\subset \mathbb{C}[z]$, which is the space consisting of all homogeneous complex polynomials of total degree $2j$.

Let $I_{n+1}$ denote the $(n+1)\times(n+1)$ identity matrix. For $0\le k,l\le n$, let $e_{k,l}$ denote the $(n+1)\times(n+1)$ matrix with $(k,l)^{\rm th}$ entry $1$ and other entries~$0$.
The special linear Lie algebra of order $n+1$ denoted by $\mathfrak{sl}_{n+1}$ consists of $(n+1)\times (n+1) $ matrices with trace zero and has the Lie bracket $[X,Y]=XY-YX$. It has basis $\{e_{kl}\}_{0\le k\neq l\le n}$ and $\{h_l\}_{0< l\le n}$, where $h_l=e_{ll}-\frac{1}{n+1}I_{n+1}$.

The $*$-structure of $\mathfrak{sl}_{n+1}$ is given by
\begin{gather}\label{eq:1}
 e_{kl}^*=e_{lk}, \quad k\neq l , \qquad h_l^*=h_l.
\end{gather}

Let ${\mathfrak {gl}}(V_{2j})$ denote the space of endomorphisms of ${V_{2j}}$, we consider the representation $\rho\colon \mathfrak{sl}_{n+1}\allowbreak \xrightarrow{} \mathfrak{gl}(V_{2j})$ defined by
\begin{gather}\label{eq:3.2}
 \rho e_{kl} =z_k\partial z_l,\quad k\neq l,\qquad \rho h_l=z_l\partial z_l-\frac{2j}{n+1}.
\end{gather}

Next, define an antiautomorphism $\mathfrak{a}$ on $\mathfrak{sl}_{n+1}$ by $\mathfrak{a}(X)=\hat{P}X^{\rm T}\hat{P}^{-1}$.
It follows easily that
\begin{gather}\label{eq:3.1}
 \mathfrak{a}(e_{kl})=\frac{\hat{p}_l}{\hat{p}_k}e_{lk},\quad k\neq l, \qquad
 \mathfrak{a}(h_{l})=h_l.
\end{gather}

We define a symmetric bilinear form $\langle\,,\,\rangle_{\kappa}$ on $V_{2j}$ by
\begin{gather*}
\big\langle z^\xi,z^\eta\big\rangle _{\kappa}=\delta_{\xi,\eta}\frac{\xi!}{\hat{p}^\xi}\theta^{2j}.
\end{gather*}

Then it is easy to check that for any $X\in \mathfrak{sl}_{n+1}$ and $v_1,v_2\in V_{2j}$
\begin{gather}\label{eq:2}
\langle \rho Xv_1,v_2\rangle _{\kappa}=\langle v_1,\rho \mathfrak{a}(X)v_2\rangle_{\kappa}.
\end{gather}

Let $R$ be the matrix
\begin{gather}\label{eq:4}
 R=\hat{\theta}\hat{P}U^{\rm T},
\end{gather}
where $\hat{\theta}\in \mathbb{C}$ such that $\det(R)=1$.
Next, we define $\hat{z}=(\hat{z}_0,\dots,\hat{z}_n)$ by $\hat{z}=zR$.
\begin{Lemma}\label{lemma:3.1}
Define operator $\operatorname{Ad}_R$ on $\mathfrak{sl}_{n+1}$ by $\operatorname{Ad}_R(X)=R^{-1}XR$, where $R=(r_{kl})_{0\le k,l\le n}$ is defined in equation~\eqref{eq:4}. Then $\operatorname{Ad}_R$ is a Lie algebra automorphism of $\mathfrak{sl}_{n+1}$.
\end{Lemma}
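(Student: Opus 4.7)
The plan is to verify the three defining properties of a Lie algebra automorphism in turn: that $\operatorname{Ad}_R$ is a well-defined linear map from $\mathfrak{sl}_{n+1}$ to itself, that it is bijective, and that it preserves the bracket. Since conjugation by any invertible matrix is a standard automorphism of $\mathfrak{gl}_{n+1}$, the real content is (a) confirming that $R$ is invertible so that $R^{-1}XR$ makes sense, and (b) checking that the image lies in the trace-zero subspace. Everything else reduces to associativity of matrix multiplication.

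First I would verify that $R$ is invertible. Since $\hat{P}=\operatorname{diag}(\hat{p}_0,\dots,\hat{p}_n)$ with all $\hat{p}_k\neq 0$ (as noted after Definition~\ref{def:1}), the matrix $\hat{P}$ is invertible. For $U$, condition~(3) of Definition~\ref{def:1} gives $\nu P U\hat{P}U^{\rm T}=I_{n+1}$, which exhibits $\hat{P}U^{\rm T}\nu P$ as a left inverse of $U$, so $U$ (and hence $U^{\rm T}$) is invertible. Thus $\hat{P}U^{\rm T}$ is invertible, so a nonzero scalar $\hat{\theta}$ can be chosen with $\det(R)=1$, and $R$ is invertible.

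Next I would check that $\operatorname{Ad}_R$ maps $\mathfrak{sl}_{n+1}$ into itself. For $X\in\mathfrak{sl}_{n+1}$, cyclicity of the trace gives
\begin{gather*}
 \operatorname{tr}\bigl(R^{-1}XR\bigr)=\operatorname{tr}\bigl(XRR^{-1}\bigr)=\operatorname{tr}(X)=0,
\end{gather*}
so $\operatorname{Ad}_R(X)\in\mathfrak{sl}_{n+1}$. Linearity is immediate, and $\operatorname{Ad}_R$ is a bijection with explicit inverse $\operatorname{Ad}_{R^{-1}}$, since $\operatorname{Ad}_R\circ\operatorname{Ad}_{R^{-1}}(X)=R^{-1}(RXR^{-1})R=X$ and similarly the other way. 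Finally, for $X,Y\in\mathfrak{sl}_{n+1}$ one computes
\begin{gather*}
 \bigl[\operatorname{Ad}_R(X),\operatorname{Ad}_R(Y)\bigr]=R^{-1}XRR^{-1}YR-R^{-1}YRR^{-1}XR=R^{-1}(XY-YX)R=\operatorname{Ad}_R\bigl([X,Y]\bigr),
\end{gather*}
so the bracket is preserved.

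I do not expect a genuine obstacle: the only nontrivial ingredient is extracting invertibility of $U$ from condition~(3), after which the lemma is a direct calculation. The reason the lemma is singled out is presumably that $\operatorname{Ad}_R$ will be used subsequently (via the transformed coordinates $\hat{z}=zR$) to produce a second $*$-representation of $\mathfrak{sl}_{n+1}$ unitarily equivalent to $\rho$, so one wants the automorphism statement formally in place.
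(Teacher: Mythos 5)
Your proof is correct and matches the paper's approach: the paper's own proof of this lemma is simply ``It can be checked directly,'' and your argument is exactly that direct check, written out (invertibility of $R$ via condition~(3) of Definition~\ref{def:1}, trace preservation by cyclicity, and the standard conjugation identities for bijectivity and bracket preservation).
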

\begin{proof}
 It can be checked directly.
\end{proof}

Last, we list some properties of the multivariate Krawtchouk polynomial whose proof could be found in~\cite{iliev_2012}.
\begin{Proposition}[{\cite[Corollary 5.2]{iliev_2012}}]\label{prop:3.1}
For $\xi,\eta\in \Omega_{2j}$, the multivariate polynomial~$K$ has the following bilinear form,
\begin{gather*}
 K(\xi,\eta)=\frac{p_0^{2j}}{(2j)!}\big\langle z^\xi,\hat{z}^\eta\big\rangle_{\kappa}.
\end{gather*}
\end{Proposition}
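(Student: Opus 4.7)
The plan is to verify Proposition~\ref{prop:3.1} by matching generating functions in $\xi$. Summing both sides against $\binom{2j}{\xi}\prod_{k=1}^n w_k^{\xi_k}$ over $\xi\in\Omega_{2j}$, I want to recover the right-hand side of the generating function defining $K(\xi,\eta)$, namely $\prod_{k=0}^n\bigl(1+\sum_{l=1}^n u_{kl}w_l\bigr)^{\eta_k}$.

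First, I would expand $\hat{z}^\eta = \sum_{|\xi|=2j} c_\xi\, z^\xi$ as a polynomial in $z$ of total degree $2j$; this is possible because $\hat{z}_l = (zR)_l$ is linear in $z$. Orthogonality of the monomials with respect to $\langle\cdot,\cdot\rangle_\kappa$ then gives $\langle z^\xi,\hat{z}^\eta\rangle_\kappa = c_\xi\,(\xi!/\hat{p}^\xi)\,\theta^{2j}$. Combining this with $\binom{2j}{\xi}\xi! = (2j)!$ and the constraint $|\xi|=2j$, the generating sum collapses to
\[
\sum_{\xi\in\Omega_{2j}} \binom{2j}{\xi}\frac{p_0^{2j}}{(2j)!}\,\langle z^\xi,\hat{z}^\eta\rangle_\kappa \prod_{k=1}^n w_k^{\xi_k} \;=\; \hat{z}^\eta\Big|_{z_k = w_k p_0\theta/\hat{p}_k,\ w_0:=1},
\]
that is, the polynomial $\hat{z}^\eta$ evaluated at the specialization $z_k = w_k p_0\theta/\hat{p}_k$ for $k\geq 1$ and $z_0 = p_0\theta/\hat{p}_0$ (the extra factor $(p_0\theta)^{2j}/\hat{p}^\xi$ being accounted for by $|\xi|=2j$).

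Next, I would plug this substitution into $\hat{z}_l = \sum_k z_k\,r_{kl}$ with $r_{kl} = \hat\theta\,\hat{p}_k\,u_{lk}$ from~\eqref{eq:4}. The factor $\hat{p}_k$ cancels, and invoking $u_{l0}=1$ from Definition~\ref{def:1} gives
\[
\hat{z}_l \;=\; p_0\theta\hat\theta\bigg(1+\sum_{k=1}^n u_{lk}\,w_k\bigg).
\]
Taking the product over $l$ with multiplicities $\eta_l$ yields $\hat{z}^\eta|_{\mathrm{spec.}} = (p_0\theta\hat\theta)^{2j}\prod_{l=0}^n\bigl(1+\sum_{k=1}^n u_{lk}w_k\bigr)^{\eta_l}$. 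The normalization constant $\theta$ in the bilinear form is chosen, following Iliev~\cite{iliev_2012}, so that $p_0\theta\hat\theta=1$; this removes the prefactor and reproduces the defining generating function of $K$. Matching coefficients of $\prod_{k=1}^n w_k^{\xi_k}$ on both sides gives the proposition.

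The main technical step is identifying the correct specialization $z_k\mapsto w_k p_0\theta/\hat{p}_k$ (with $w_0=1$) that converts the bilinear form into the defining generating function; once this is in hand, the remaining steps reduce to the factorization $\hat{z}=zR$, the relation $u_{l0}=1$, and the normalization $p_0\theta\hat\theta=1$. Bookkeeping these constants is the only delicate part of the argument.
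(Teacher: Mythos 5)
Your argument is correct, and the comparison here is somewhat degenerate: the paper gives no proof of this statement at all --- Proposition~\ref{prop:3.1} is quoted from \cite[Corollary~5.2]{iliev_2012} and the reader is simply referred there. Your generating-function verification is therefore a self-contained addition rather than a variant of an argument in the text. The chain of steps is sound: expanding $\hat z^\eta$ in the monomial basis and using the diagonality of $\langle\cdot,\cdot\rangle_\kappa$, absorbing the constants via $\binom{2j}{\xi}\xi!=(2j)!$ and homogeneity into the substitution $z_k\mapsto p_0\theta w_k/\hat p_k$ (with $w_0=1$), and then using $r_{kl}=\hat\theta\hat p_k u_{lk}$ together with $u_{l0}=1$ to produce $\prod_{l}\big(1+\sum_{k\ge1}u_{lk}w_k\big)^{\eta_l}$ is exactly right, and matching coefficients is legitimate because the monomials $w_1^{\xi_1}\cdots w_n^{\xi_n}$, $\xi\in\Omega_{2j}$, are pairwise distinct. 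The one delicate point is the one you flag yourself: the constant $\theta$ in the bilinear form is never defined in this paper (only $\hat\theta$ is), so your proof in effect determines what $\theta$ must be, namely $\theta=1/(p_0\hat\theta)=\nu/\hat\theta$. That normalization is the one used by Iliev, where $\theta$ and $\hat\theta$ form a dual pair of determinant normalizations, and condition (3) of Definition~\ref{def:1} makes $\theta\hat\theta=\nu$ consistent (it forces $(\theta\hat\theta)^{n+1}=\nu^{n+1}$); it is also the unique choice compatible with the orthogonality relations of Proposition~\ref{prop:3.2}. With that convention made explicit, your proof is complete.
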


\begin{Proposition}[{\cite[Corollary 5.3]{iliev_2012}}]\label{prop:3.2}
For $\xi,\eta,\zeta\in \Omega_{2j}$ we have the following relations:
\begin{gather*}
 \sum_{\xi\in\Omega_{2j}}K(\xi,\eta)K(\xi,\zeta)w_{\hat{p}}(\xi)=\frac{p_0^{2j}\delta_{\eta,\zeta}}{w_p(\eta)},
\\
 \sum_{\xi\in\Omega_{2j}}K(\eta,\xi)K(\zeta,\xi)w_p(\xi)=\frac{p_0^{2j}\delta_{\eta,\zeta}}{w_{\hat{p}}(\eta)}.
\end{gather*}
\end{Proposition}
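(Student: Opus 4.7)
The plan is to derive both identities from Proposition~\ref{prop:3.1} by a Parseval-style expansion in the orthogonal basis $\{z^\xi\}_{\xi\in\Omega_{2j}}$ of $V_{2j}$. Since $\langle z^\xi,z^\eta\rangle_\kappa = \delta_{\xi,\eta}\,\xi!\,\theta^{2j}/\hat p^\xi$, every pair $f,g\in V_{2j}$ satisfies the finite Parseval identity
\[
\langle f,g\rangle_\kappa \;=\; \sum_{\xi\in\Omega_{2j}}\frac{\langle f,z^\xi\rangle_\kappa\,\langle z^\xi,g\rangle_\kappa}{\langle z^\xi,z^\xi\rangle_\kappa}.
\]
Choosing $f=\hat z^\eta$, $g=\hat z^\zeta$, substituting $\langle z^\xi,\hat z^\eta\rangle_\kappa = \tfrac{(2j)!}{p_0^{2j}}K(\xi,\eta)$ from Proposition~\ref{prop:3.1}, and recognizing $\hat p^\xi/\xi! = w_{\hat p}(\xi)/(2j)!$ rearranges this to
\[
\sum_{\xi\in\Omega_{2j}}K(\xi,\eta)K(\xi,\zeta)\,w_{\hat p}(\xi)\;=\;\frac{p_0^{4j}\theta^{2j}}{(2j)!}\,\langle\hat z^\eta,\hat z^\zeta\rangle_\kappa,
\]
so the first identity of Proposition~\ref{prop:3.2} is equivalent to an independent evaluation of $\langle\hat z^\eta,\hat z^\zeta\rangle_\kappa$.

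To carry out that evaluation I would use the operator realization $\langle f,g\rangle_\kappa = \theta^{2j}\,g(\partial_z/\hat p)f(z)|_{z=0}$, under which $\hat z^\zeta(\partial_z/\hat p) = \hat\theta^{2j}\prod_k D_k^{\zeta_k}$ with $D_k = \sum_l u_{kl}\,\partial_{z_l}$. The key algebraic ingredient is condition~(3) of Definition~\ref{def:1}, rewritten as $U\hat P U^{\mathrm T} = \nu^{-1}P^{-1}$. For the linear forms $Y_{k'} := \sum_l\hat p_l u_{k'l}z_l$ this yields $D_k Y_{k'} = (U\hat P U^{\mathrm T})_{kk'} = \nu^{-1}p_k^{-1}\delta_{k,k'}$, so in the new coordinates $\{Y_k\}$ the operators $D_k$ become $\nu^{-1}p_k^{-1}\partial_{Y_k}$. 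Since $\hat z^\eta$ is a scalar multiple of $\prod_k Y_k^{\eta_k}$, applying $\prod_k D_k^{\zeta_k}$ and evaluating at $z=0$ forces $\eta=\zeta$ and produces $\eta!\prod_k(\nu p_k)^{-\eta_k}$ times a power of $\hat\theta$. Collecting constants and using $\eta!/(p^\eta(2j)!) = 1/w_p(\eta)$ matches the right-hand side of the first identity.

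For the second identity I would invoke the symmetry of multivariate Krawtchouk polynomials established in~\cite{iliev_2012}: there is a dual 4-tuple $\hat\kappa\in\mathcal K_n$ (obtained by swapping $P$ with $\hat P$ and transposing $U$) for which $K(\eta,\xi,\kappa,j) = K(\xi,\eta,\hat\kappa,j)$. This identity converts the sum in the second formula into the first sum for $\hat\kappa$ with $p$ and $\hat p$ exchanged, and the already-established first identity applied to $\hat\kappa$ delivers the desired formula.

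The main obstacle is the direct evaluation of $\langle\hat z^\eta,\hat z^\zeta\rangle_\kappa$: this is precisely where condition~(3) of Definition~\ref{def:1} is used decisively, transforming a seemingly non-diagonal pairing of polynomials in the $z_l$ into a diagonal one in the dual coordinates $Y_k$ and producing the Kronecker delta $\delta_{\eta,\zeta}$.
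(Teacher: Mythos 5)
The paper contains no proof of this proposition --- it is quoted directly from Iliev~\cite[Corollary~5.3]{iliev_2012} with the remark that the proof ``could be found'' there --- so there is no internal argument to compare against; what you have written is essentially a reconstruction of Iliev's proof, and it is correct in outline and in the key computations. The Parseval step is legitimate because $\{z^\xi\}$ is an orthogonal basis for the nondegenerate symmetric bilinear form $\langle\,,\,\rangle_\kappa$, and your identity $D_k Y_{k'}=(U\hat{P}U^{\rm T})_{kk'}=\nu^{-1}p_k^{-1}\delta_{k,k'}$, extracted from condition~(3) of Definition~\ref{def:1}, is exactly the mechanism that diagonalizes $\langle\hat z^\eta,\hat z^\zeta\rangle_\kappa$ and produces $\delta_{\eta,\zeta}\,\eta!\,\nu^{-2j}/p^\eta$ up to constants. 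Two bookkeeping points need to be closed. First, the constant $\theta$ in the bilinear form is never defined in this paper; your ``collecting constants'' step requires $(\theta\hat\theta)^{4j}=\nu^{4j}$, which you can import from Iliev's normalization $\theta\hat\theta=\nu$, or deduce by applying your own operator calculus to Proposition~\ref{prop:3.1} (which forces $(\theta\hat\theta)^{2j}=\nu^{2j}$), or fix by the single evaluation $\eta=\zeta=(2j,0,\dots,0)$, where $K(\xi,\eta)\equiv 1$ and both sides equal $1$. Second, the duality $K(\eta,\xi,\kappa,j)=K(\xi,\eta,\hat\kappa,j)$ used for the second identity is itself a nontrivial cited fact; it does follow from your own setup --- the evaluation $\bigl[\prod_k\bigl(\sum_l u_{kl}\partial_{z_l}\bigr)^{\eta_k}z^\xi\bigr](0)$ is manifestly symmetric under $U\mapsto U^{\rm T}$, $\xi\leftrightarrow\eta$, and $\hat\kappa=\bigl(\nu,\hat{P},P,U^{\rm T}\bigr)$ lies in $\mathcal{K}_n$ because transposing $\nu PU\hat{P}U^{\rm T}=I_{n+1}$ gives $\nu\hat{P}U^{\rm T}PU=I_{n+1}$ --- so it would be cleaner to prove it in place with one extra line rather than cite it. With those two gaps filled, the argument is complete and matches the standard (Iliev) route.
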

\begin{Remark}
If $U\in\kappa$ is a real matrix, then it follows from the generating function that the multivariate Krawtchouk polynomial is real valued. In this case, Proposition~\ref{prop:3.2} is the orthogonality relation for the multivariate Krawtchouk polynomial in $l^2(w_p)$ and $l^2(w_{\hat{p}})$.
\end{Remark}

\subsection[Self-duality of the multi-species SEP(2j)]{Self-duality of the multi-species SEP$\boldsymbol{(2j)}$}\label{section3.2}

In this subsection, we show that the multi-species SEP($2j$) is self dual with respect to duality functions given by homogeneous products of multivariate Krawtchouk polynomials. Suppose $p$ and $\hat{p}$ in the 4-tuple $\kappa\in \mathcal{K}_n$ as in Definition~\ref{def:1} are both probability measures.

Let $l^2(w_p)$ be a Hilbert space with inner product $(f,g)_p=\sum_{\xi\in\Omega_{2j}}f(\xi)\overline{g(\xi)}w_p(\xi)$. Now we define a $*$-representations $\rho_p$ of $\mathfrak{sl}_{n+1}$ on $l^2(w_p)$ by
\begin{alignat*}{3}
 &\rho_p(e_{kl})f(\xi)=\sqrt{\frac{p_k}{p_l}}\xi_lf\big(\xi_{l,k}^{-1,+1}\big) \qquad &&\text{for} \ 0\le k\neq l\le n,& \\
 &\rho_p(h_l)f(\xi)=\left(\xi_l-\frac{2j}{n+1}\right)f(\xi)\qquad&& \text{for} \ 0< l\le n,&
\end{alignat*}
where $\xi_{l,k}^{+1,-1}$ represents the variable with $\xi_l$ increased by 1 and $\xi_k$ decreased by 1. Recalling the $*$-structure defined in equation~\eqref{eq:1}, it is straightforward to check that $(\rho_p(X)f,g)_p=(f,\rho_p(X^*)g)_p$ for all $X\in \mathfrak{sl}_{n+1} $.

Next, we introduce another non-trivial $*$-representation $\sigma_p$ of $\mathfrak{sl}_{n+1}$ on $l^2(w_{p})$ that is unitarily equivalent to $\rho_{\hat{p}}$.
\begin{Definition}\label{def:2}
For each $\rho_{\hat{p}}$, define a corresponding representation by $\sigma_p=\rho_{\hat{p}}\circ \operatorname{Ad}_R $, where~$\operatorname{Ad}_R$ is the automorphism defined in Lemma~\ref{lemma:3.1}.
\end{Definition}

\begin{Proposition}\label{prop:3.4}
If the matrix $U$ in the $4$-tuple $\kappa\in \mathcal{K}_n$ is real, then the representation $\sigma_p$ defined in Definition~{\rm \ref{def:2}} is a $*$-representation of~$\mathfrak{sl}_{n+1}$ on~$l^2(w_{p})$.
\end{Proposition}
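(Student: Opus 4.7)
The plan is to establish Proposition~\ref{prop:3.4} by exhibiting an explicit unitary intertwiner $\Lambda\colon l^2(w_p)\to l^2(w_{\hat p})$ satisfying $\Lambda\,\sigma_p(X)=\rho_{\hat p}(X)\,\Lambda$ for every $X\in\mathfrak{sl}_{n+1}$, and then transferring the $*$-representation property from $\rho_{\hat p}$ on $l^2(w_{\hat p})$ (where it is known) to $\sigma_p$ on $l^2(w_p)$. Since $\rho_{\hat p}\circ\operatorname{Ad}_R$ automatically defines a representation (Lemma~\ref{lemma:3.1}), the only content of the proposition is the compatibility of this representation with the inner product on $l^2(w_p)$, which the intertwiner delivers.

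The natural candidate for $\Lambda$ is the multivariate Krawtchouk transform
\begin{gather*}
(\Lambda f)(\eta)=\sum_{\xi\in\Omega_{2j}} K(\xi,\eta)\,f(\xi)\,w_p(\xi),
\end{gather*}
normalized so that $\Lambda^{*}\Lambda=\operatorname{id}$. Unitarity follows directly from the two orthogonality relations in Proposition~\ref{prop:3.2}: one gives $\Lambda^{*}\Lambda=\operatorname{id}_{l^2(w_p)}$ and the other gives $\Lambda\Lambda^{*}=\operatorname{id}_{l^2(w_{\hat p})}$. The reality hypothesis on $U$ enters here through the Remark after Proposition~\ref{prop:3.2}, which guarantees $K(\xi,\eta)\in\mathbb{R}$ and so avoids any conflict between the kernel $K$ in $\Lambda$ and its conjugate in $\Lambda^{*}$.

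For the intertwining property, I would rewrite $K(\xi,\eta)$ using Proposition~\ref{prop:3.1} as a constant multiple of $\langle z^{\xi},\hat z^{\eta}\rangle_{\kappa}$, and then translate the desired intertwining into a polynomial identity on $V_{2j}$. Because $\hat z=zR$, the operator $\rho(\operatorname{Ad}_R X)$ acting on the $z$-argument coincides with $\rho(X)$ acting on the $\hat z$-argument of the pairing; this is exactly what Lemma~\ref{lemma:3.1} encodes algebraically. Moving this action from the $z$-side to the $\hat z$-side using the $\mathfrak{a}$-self-adjointness relation~\eqref{eq:2} then produces the identity $\Lambda\bigl(\rho_{\hat p}(\operatorname{Ad}_R X)f\bigr)=\rho_{\hat p}(X)(\Lambda f)$, which is the required intertwining. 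By linearity it suffices to verify this on the generators $e_{kl}$ and $h_l$, and the constraint $\nu PU\hat PU^{\mathrm T}=I$ from Definition~\ref{def:1} (together with $p_0=\hat p_0$) is what makes the normalization constants in $R=\hat\theta\hat PU^{\mathrm T}$ and in the kernel $K$ match up.

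Once the intertwining is in hand, the $*$-representation property is the standard three-line transfer:
\begin{gather*}
\bigl(\sigma_p(X)f,g\bigr)_{p}=\bigl(\Lambda\sigma_p(X)f,\Lambda g\bigr)_{\hat p}=\bigl(\rho_{\hat p}(X)\Lambda f,\Lambda g\bigr)_{\hat p}=\bigl(\Lambda f,\rho_{\hat p}(X^{*})\Lambda g\bigr)_{\hat p}=\bigl(f,\sigma_p(X^{*})g\bigr)_{p}.
\end{gather*}
The main obstacle is the intertwining step, which requires simultaneously juggling the change of basis $z\leftrightarrow \hat z=zR$, the antiautomorphism $\mathfrak{a}$ built from $\hat P$, and the identity $\nu PU\hat PU^{\mathrm T}=I$. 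The reality of $U$ is indispensable throughout: it makes $K$ real (so $\Lambda^{*}$ has the expected form), it lets $\hat\theta$ (and hence $R$) be chosen real (so $R^{*}$ coincides with $R^{\mathrm T}$ up to sign), and it ensures that the bilinear form $\langle\cdot,\cdot\rangle_{\kappa}$ agrees with its sesquilinear counterpart on the polynomials in play.
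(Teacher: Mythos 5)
Your strategy is genuinely different from the paper's. The paper proves Proposition~\ref{prop:3.4} by brute force: it computes $\sigma_p(e_{im})$ explicitly as the double sum \eqref{eq:5.3} over the entries of $R$ and $Q=R^{-1}$, reindexes and changes variables inside the inner product, and verifies the scalar identity $\frac{\hat p_i}{\hat p_m}q_{ki}r_{ml}=\frac{p_k}{p_l}q_{lm}r_{ik}$ using $r_{ki}=\hat\theta\hat p_k u_{ik}$ and $q_{ki}=\frac{1}{p_0\hat\theta}p_k u_{ki}$; reality of $U$ enters because the coefficients of $\sigma_p(e_{mi})g$ get complex-conjugated by the sesquilinear inner product. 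You instead propose to build the unitary Krawtchouk transform and the intertwining relation first and then transfer the $*$-property from $\rho_{\hat p}$ by the standard three-line argument. This is not circular: the orthogonality relations (Proposition~\ref{prop:3.2}) and the kernel identity \eqref{eq:3} (proved in the paper via Proposition~\ref{prop:3.1} and the antiautomorphism property \eqref{eq:2}) do not rely on Proposition~\ref{prop:3.4}, so the logical order can indeed be inverted. The trade-off is that your route front-loads exactly the content of the two propositions that follow \ref{prop:3.4} in the paper, and the hardest part --- the intertwining identity, which rests on Iliev's computation $\rho(\hat e_{im})\hat z^{\eta}=\hat z_i\partial_{\hat z_m}\hat z^{\eta}$ under the change of variables $\hat z=zR$ --- is only sketched in your write-up, not carried out.

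There is one concrete error you must fix: your $\Lambda$ is not unitary as written. The relations in Proposition~\ref{prop:3.2} say that the functions $\xi\mapsto K(\xi,\eta)$ (first slot) are orthogonal in $l^2(w_{\hat p})$, while the functions $\eta\mapsto K(\xi,\eta)$ (second slot) are orthogonal in $l^2(w_p)$. Your map $(\Lambda f)(\eta)=\sum_{\xi}K(\xi,\eta)f(\xi)w_p(\xi)$ integrates the \emph{first} slot against the weight $w_p$, which matches neither relation; testing it on indicator functions produces the quantity $\sum_{\eta}K(\zeta,\eta)^2 w_{\hat p}(\eta)$, which is controlled by neither orthogonality relation, so the claim ``one relation gives $\Lambda^{*}\Lambda=\operatorname{id}$ and the other gives $\Lambda\Lambda^{*}=\operatorname{id}$'' fails for this $\Lambda$. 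The correct choices are either the paper's map $(\Lambda f)(\eta)=p_0^{-j}\sum_{\xi}w_{\hat p}(\xi)f(\xi)K(\xi,\eta)$ from $l^2(w_{\hat p})$ to $l^2(w_p)$, satisfying $\Lambda\rho_{\hat p}(X)=\sigma_p(X)\Lambda$, or its adjoint $(\Lambda^{*}g)(\xi)=p_0^{-j}\sum_{\eta}w_p(\eta)K(\xi,\eta)g(\eta)$ going the direction you want; with either one the transfer argument goes through verbatim. Also note that the domain of $\sigma_p$ is the second-slot space $l^2(w_p)$, so the operator you feed into the intertwining relation $\Lambda\sigma_p(X)=\rho_{\hat p}(X)\Lambda$ must act on functions of $\eta$, which is another reason the slots in your kernel transform need to be swapped.
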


\begin{proof}
By definitions of the matrices $U$ and $R$, when $U$ is a real matrix, then $R$ and $R^{-1}$ are all real matrices. For ease of notation, we write $Q=R^{-1}=(q_{i,m})_{0\le i,m\le n}$.
Then, computing $\sigma_p$ explicitly, we have
\begin{gather}\label{eq:5.3}
 \sigma_p(e_{im})f(\eta)=\sqrt{\frac{\hat{p}_i}{\hat{p}_m}}\sum_{k,l=0}^n q_{ki}r_{ml}\eta_lf\big(\eta_{k,l}^{+1,-1}\big).
\end{gather}

Next we verify that for any $X\in \mathfrak{sl}_{n+1} $, $\left(\sigma_p(X) f(\eta),g(\eta)\right)_{p}= ( f(\eta),\sigma_p(X^*)g(\eta) )_{p}$.
First, we plug equation~\eqref{eq:5.3} in the inner products, when $i\neq m$,
\begin{gather}
 \big(\sigma_p(e_{im}) f(\eta),g(\eta)\big)_{p}=\sqrt{\frac{\hat{p}_i}{\hat{p}_m}}\sum_{k,l=0}^n q_{ki}r_{ml}\big(\eta_lf\big(\eta_{k,l}^{+1,-1}\big),g(\eta)\big)_{p},\nonumber\\
\label{eq:5.1}
\big( f(\eta),\sigma_p(e_{mi})g(\eta)\big)_{p}=\sqrt{\frac{\hat{p}_m}{\hat{p}_i}}\sum_{k,l=0}^n q_{km}r_{il}\big(f(\eta),\eta_lg\big(\eta_{k,l}^{+1,-1}\big)\big)_{p}.
\end{gather}
By switching $k$ and $l$ in equation~\eqref{eq:5.1}, we have that
\begin{gather*}
 \big( f(\eta),\sigma_p(e_{mi})g(\eta)\big)_{p}=\sqrt{\frac{\hat{p}_m}{\hat{p}_i}}\sum_{k,l=0}^n q_{lm}r_{ik}\big(\eta_kf(\eta),g\big(\eta_{l,k}^{+1,-1}\big)\big)_{p}.
\end{gather*}
Now define $\tilde{\eta}=\eta_{l,k}^{+1,-1}$, we get \begin{gather*}
\big( f(\eta),\sigma_p(e_{mi})g(\eta)\big)_{p}=\sqrt{\frac{\hat{p}_m}{\hat{p}_i}}\sum_{k,l=0}^n q_{lm}r_{ik}\frac{p_k}{p_l}\big(\tilde{\eta}_lf\big(\tilde{\eta}_{l,k}^{-1,+1}\big),g(\tilde{\eta})\big)_{p}.
\end{gather*}

Computing the entries of matrices $R$ and $Q$ explicitly, we have $r_{ki}=\hat{\theta}\hat{p}_{k}u_{ik}$ and $q_{ki}=\frac{1}{p_0\hat{\theta}}p_ku_{ki}$. Plugging in $r$ and $q$, we have that $\frac{\hat{p}_i}{\hat{p}_m}q_{ki}r_{ml}=\frac{p_k}{p_l}q_{lm}r_{ik}$, which gives that
\[
\big(\sigma_p(e_{im}) f(\eta),g(\eta)\big)_{p}=\big( f(\eta),\sigma_p(e_{mi})g(\eta)\big)_{p}.
\]

The proof for $h_l$ is similar.
\end{proof}
\begin{Remark}
Proposition~\ref{prop:3.4} is nontrivial since the automorphism $\operatorname{Ad}_R$ does not preserve the $*$-structure of $\mathfrak{sl}_{n+1}$, i.e., there exists $X\in \mathfrak{sl}_{n+1}$ such that $\operatorname{Ad}_R(X^*)\neq \operatorname{Ad}_R(X)^*$. For example,
\[
\operatorname{Ad}_R(e_{12}^*)=\operatorname{Ad}_R(e_{21})=\sum_{k,l}q_{k2}r_{1l}e_{kl},
\] while
\[
\operatorname{Ad}_R(e_{12})^*=\bigg(\sum_{k,l}q_{k1}r_{2l}e_{kl}\bigg)^*=\sum_{k,l}q_{k1}r_{2l}e_{lk}=\sum_{k,l}q_{l1}r_{2k}e_{kl},
\] which are not equal for general~$\kappa$.
\end{Remark}

\begin{Proposition}
When the matrix $U$ in the $4$-tuple $\kappa$ is real, $\rho_{\hat{p}}$ and $\sigma_p$ satisfy the following property: $[\rho_{\hat{p}}(X^*)K(\cdot,\eta)](\xi)=[\sigma_p(X)K(\xi,\cdot)](\eta)$ for any $X\in\mathfrak{sl}_{n+1}$ and $(\xi,\eta)\in \Omega_{2j}\times \Omega_{2j}$.
\end{Proposition}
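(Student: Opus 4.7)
The plan is to reduce the identity to a computation inside the polynomial space $V_{2j}$ by invoking the bilinear form representation from Proposition~\ref{prop:3.1}: $K(\xi,\eta)=\frac{p_0^{2j}}{(2j)!}\langle z^\xi,\hat z^\eta\rangle_\kappa$. Once both sides are expressed via the pairing $\langle\cdot,\cdot\rangle_\kappa$, the transfer identity~\eqref{eq:2} together with the antiautomorphism $\mathfrak{a}$ moves operators between the two slots, and a direct change of variables $\hat z=zR$ converts the action on the $\hat z^\eta$ side into precisely the expression~\eqref{eq:5.3} for $\sigma_p$. By linearity it is enough to verify the identity for $X$ in the basis $\{e_{kl}\}_{k\neq l}\cup\{h_l\}_{0<l\le n}$ of $\mathfrak{sl}_{n+1}$.

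For the main case $X=e_{kl}$ I would begin with
\[
[\rho_{\hat p}(e_{lk})K(\cdot,\eta)](\xi)=\sqrt{\hat p_l/\hat p_k}\,\xi_k\,K(\xi^{-1,+1}_{k,l},\eta).
\]
Using the polynomial identity $\xi_k z^{\xi^{-1,+1}_{k,l}}=\rho(e_{lk})z^\xi$ from~\eqref{eq:3.2}, together with Proposition~\ref{prop:3.1}, this becomes $\sqrt{\hat p_l/\hat p_k}\cdot\frac{p_0^{2j}}{(2j)!}\langle\rho(e_{lk})z^\xi,\hat z^\eta\rangle_\kappa$. Applying~\eqref{eq:2} with $\mathfrak{a}(e_{lk})=(\hat p_k/\hat p_l)e_{kl}$ from~\eqref{eq:3.1} transfers the operator to the $\hat z$-slot, giving
\[
[\rho_{\hat p}(e_{lk})K(\cdot,\eta)](\xi)=\sqrt{\hat p_k/\hat p_l}\,\frac{p_0^{2j}}{(2j)!}\langle z^\xi,\rho(e_{kl})\hat z^\eta\rangle_\kappa.
\]

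The decisive step is evaluating $\rho(e_{kl})\hat z^\eta=z_k\,\partial_{z_l}\hat z^\eta$. Substituting $\hat z_i=\sum_j z_j r_{ji}$ inside the derivative and $z_k=\sum_m q_{mk}\hat z_m$ (with $Q=R^{-1}$) outside it, a direct product-rule calculation yields
\[
\rho(e_{kl})\hat z^\eta=\sum_{a,b}q_{ak}r_{lb}\,\eta_b\,\hat z^{\eta^{-1,+1}_{b,a}}.
\]
Feeding this back into the pairing and invoking Proposition~\ref{prop:3.1} a second time recasts the left-hand side as $\sqrt{\hat p_k/\hat p_l}\sum_{a,b}q_{ak}r_{lb}\eta_b\,K(\xi,\eta^{-1,+1}_{b,a})$, which matches $[\sigma_p(e_{kl})K(\xi,\cdot)](\eta)$ term-by-term via~\eqref{eq:5.3}. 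The Cartan case $X=h_l$ is analogous and in fact easier, since $\mathfrak{a}(h_l)=h_l$ so the transfer step introduces no prefactor and only the resulting diagonal contribution from $\rho(h_l)\hat z^\eta$ needs to be tracked.

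The main obstacle I anticipate is the bookkeeping in the change-of-variable computation for $\rho(e_{kl})\hat z^\eta$: keeping straight the multi-index shifts $\eta^{-1,+1}_{b,a}$ and the summation indices $(a,b)$ carefully enough that the final sum lines up index-for-index with~\eqref{eq:5.3}. The hypothesis that $U$ is real is used so that $R$, $Q$, and the square roots $\sqrt{\hat p_k/\hat p_l}$ appearing in both $\rho_{\hat p}$ and in~\eqref{eq:5.3} are all real, ensuring the computation remains entirely within $\mathbb{R}$ and the two sides can be compared directly.
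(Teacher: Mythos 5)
Your proposal is correct and follows essentially the same route as the paper: express $K$ via the bilinear form of Proposition~\ref{prop:3.1}, move $\rho(e_{lk})$ to the $\hat z$-slot using~\eqref{eq:2} and~\eqref{eq:3.1}, and identify the resulting action on $\hat z^\eta$ with $\sigma_p$. The only cosmetic difference is that you verify the last identification by an explicit change of variables $\hat z = zR$ and term-by-term comparison with~\eqref{eq:5.3}, whereas the paper invokes Iliev's identity $\rho(\operatorname{Ad}_R(\hat e_{im}))\hat z^\eta=\hat z_i\partial_{\hat z_m}\hat z^\eta$ — the same computation packaged differently.
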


\begin{proof}
Recall from Proposition~\ref{prop:3.1}, the multivariate Krawtchouk polynomials can be written in bilinear form. Also recall that the antiautomorphism $\mathfrak{a}$ defined in \eqref{eq:3.1} has property~\eqref{eq:2}. Notice that for function~$z^\xi$, we can write~$\rho_{p}$ in terms of the representation $\rho$ defined in~\eqref{eq:3.2},
\begin{gather*}
 \rho_{\hat{p}}(e_{im})z^{\xi}=\sqrt{\frac{\hat{p}_i}{\hat{p}_m}}\rho(e_{im})z^{\xi}.
\end{gather*}
Thus, for $i\neq m$,
\begin{align*}
 [\rho_{\hat{p}}(e_{im})K(\cdot,\eta)](\xi)& =\frac{p_0^{2j}}{(2j)!}\sqrt{\frac{\hat{p}_i}{\hat{p}_m}}\big\langle \rho(e_{im})z^{\xi},\hat{z}^\eta\big\rangle_{\kappa} =\frac{p_0^{2j}}{(2j)!}\sqrt{\frac{\hat{p}_i}{\hat{p}_m}}\big\langle z^{\xi},\rho\mathfrak{a}(e_{im})\hat{z}^\eta\big\rangle _{\kappa}\\
& =\frac{p_0^{2j}}{(2j)!}\sqrt{\frac{\hat{p}_m}{\hat{p}_i}}\big\langle z^{\xi},\rho(e_{mi})\hat{z}^\eta\big\rangle _{\kappa}=[\sigma_p(e_{mi})K(\xi,\cdot)](\eta),
\end{align*}
where the last equality follows form the fact that $e_{im}=\operatorname{Ad}_R(\hat{e}_{im})$ and $\rho(\hat{e}_{im})\hat{z}^\eta=\hat{z}_i\partial_{\hat{z}_m}\hat{z}^\eta$~\cite{iliev_2012}, thus \[ \sqrt{\frac{\hat{p}_m}{\hat{p}_i}}\rho(e_{mi})\hat{z}^\eta=\sqrt{\frac{\hat{p}_m}{\hat{p}_i}}\rho\circ \operatorname{Ad}_R(\hat{e}_{mi})\hat{z}^\eta=\sigma_p(e_{mi})\hat{z}^\eta.\]

The proof for $h_l$ follows from the same argument.
\end{proof}

\begin{Proposition}
 If the matrix $U$ in the $4$-tuple $\kappa$ is real, define the operator $\Lambda\colon l^2(w_{\hat{p}})\xrightarrow[]{} l^2(w_p)$ by
\begin{gather*}
 (\Lambda f)(\eta)=p_0^{-j}\sum_{\xi\in \Omega_{2j}}w_{\hat{p}}(\xi)f(\xi)K(\xi,\eta).
\end{gather*}

Then $\Lambda$ is an unitary operator and intertwines $\rho_{\hat{p}}$ with $\sigma_p$.
The kernel $K(\xi,\eta)$ satisfies
\begin{gather}\label{eq:3}
 [\rho_{\hat{p}}(X^*)K(\cdot,\eta)](\xi)=[\sigma_p(X)K(\xi,\cdot)](\eta).
\end{gather}

\end{Proposition}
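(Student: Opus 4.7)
The plan is to split the statement into three claims: that $\Lambda$ is an isometry, that it is surjective (hence unitary, since both spaces are finite-dimensional), that it intertwines the two representations, and that the kernel equation~\eqref{eq:3} holds. The last of these requires nothing new: it is precisely the statement of the previous proposition, which established $[\rho_{\hat{p}}(X^*)K(\cdot,\eta)](\xi)=[\sigma_p(X)K(\xi,\cdot)](\eta)$ using the bilinear form expression of Proposition~\ref{prop:3.1} together with the property~\eqref{eq:2} of the antiautomorphism~$\mathfrak{a}$, so one just cites it.

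For the isometry, I would compute $(\Lambda f,\Lambda g)_p$ directly. Expanding the definition gives a double sum over $\xi_1,\xi_2\in\Omega_{2j}$ weighted by $w_{\hat{p}}(\xi_1)w_{\hat{p}}(\xi_2)$, times $\sum_{\eta\in\Omega_{2j}}K(\xi_1,\eta)K(\xi_2,\eta)w_p(\eta)$ (using that the hypothesis on $U$ being real forces $K$ to be real-valued by the Remark following Proposition~\ref{prop:3.2}, so complex conjugation can be dropped). The second orthogonality relation in Proposition~\ref{prop:3.2} collapses this inner sum to $p_0^{2j}\delta_{\xi_1,\xi_2}/w_{\hat{p}}(\xi_1)$, which together with the prefactor $p_0^{-2j}$ yields $(f,g)_{\hat{p}}$. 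Since $\Omega_{2j}$ is finite, $\dim l^2(w_{\hat{p}})=\dim l^2(w_p)=|\Omega_{2j}|$, so any isometry between them is automatically unitary.

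For the intertwining relation $\Lambda\rho_{\hat{p}}(X)=\sigma_p(X)\Lambda$, I would start from $(\Lambda\rho_{\hat{p}}(X)f)(\eta)=p_0^{-j}\sum_\xi w_{\hat{p}}(\xi)[\rho_{\hat{p}}(X)f](\xi)K(\xi,\eta)$. Using that $\rho_{\hat{p}}$ is a $*$-representation with respect to $(\cdot,\cdot)_{\hat{p}}$, the operator $\rho_{\hat{p}}(X)$ can be shifted from $f$ onto $K(\cdot,\eta)$, replacing $X$ by $X^*$; this gives $p_0^{-j}\sum_\xi w_{\hat{p}}(\xi)f(\xi)[\rho_{\hat{p}}(X^*)K(\cdot,\eta)](\xi)$. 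Now apply the kernel identity~\eqref{eq:3} to convert this into $p_0^{-j}\sum_\xi w_{\hat{p}}(\xi)f(\xi)[\sigma_p(X)K(\xi,\cdot)](\eta)$. Since $\sigma_p(X)$ is a linear operator in the $\eta$-variable and the sum is over $\xi$, it can be pulled outside the sum, yielding exactly $[\sigma_p(X)\Lambda f](\eta)$.

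The only delicate point I expect is the move "shift $\rho_{\hat{p}}(X)$ from $f$ to $K$"; one has to be careful that this uses the $*$-property of $\rho_{\hat{p}}$ (not $\sigma_p$) and the measure $w_{\hat{p}}$, which matches precisely because $\Lambda$ is defined on $l^2(w_{\hat{p}})$ with kernel integrated against $w_{\hat{p}}$. Once this bookkeeping is in place, the rest is a direct computation and everything flows from Propositions~\ref{prop:3.1} and~\ref{prop:3.2} together with the preceding proposition. No substantial obstacle is anticipated; the real content of the subsection is already contained in the kernel identity, and this proposition is the clean packaging via Theorem~\ref{th:2.1}.
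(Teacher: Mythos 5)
Your proposal is correct and follows essentially the same route as the paper: the kernel identity is inherited from the preceding proposition, the intertwining relation is read off from it by moving $\rho_{\hat{p}}(X)$ onto the kernel via the $*$-property, and unitarity comes from the orthogonality relations of Proposition~\ref{prop:3.2}. The only cosmetic difference is that you verify isometry by computing $(\Lambda f,\Lambda g)_p$ for general $f,g$, whereas the paper checks norms on the orthogonal basis of cheap duality functions $\delta_\zeta(\xi)=\delta_{\zeta,\xi}/w_{\hat{p}}(\xi)$, whose images are $p_0^{-j}K(\zeta,\cdot)$; these are the same computation by bilinearity.
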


\begin{proof}It follows directly from equation~\eqref{eq:3} that $\Lambda[\rho_{\hat{p}}(X)f]=\sigma_p(X)\Lambda(f)$ for all $X\in\mathfrak{sl}_{n+1}$, thus $\Lambda$ intertwines $\rho_{\hat{p}}$ with $\sigma_p$. Recall that if a process has reversible measure, then it's self dual with respect to the cheap duality function, which comes from the reversible measure. For the multi-species SEP$(2j)$, the cheap duality function is given by $\delta_{\zeta}(\xi)=\frac{\delta_{\zeta,\xi}}{w_{\hat{p}}(\xi)}$, which has squared norm $\frac{1}{w_{\hat{p}}(\zeta)}$ in $l^2(w_{\hat{p}})$.

On the other hand, $\Lambda(\delta_\zeta)(\eta)=p_0^{-j}K(\zeta,\eta)$ has squared norm $\frac{1}{w_{\hat{p}}(\zeta)}$ in $l^2(w_p)$. Thus $\Lambda$ maps an orthogonal basis to another orthogonal basis preserving the norm, hence $\Lambda$ is unitary.
\end{proof}

Last we show the generator defined in~\eqref{eq:6} is the image of some self-adjoint element in $\mathcal{U}(\mathfrak{sl}_{n+1})^{\otimes L}$ under the $*$-representations $\rho_{\hat{p}}^{\otimes L}$ and $\sigma_p^{\otimes L}$. We generalize the construction of the Markov generator in terms of the co-product of a Casimir element (see, e.g., \cite{Giardin__2009,Groenevelt_2018}) to the multi-species cases.

We start by constructing a Casimir element of $\mathcal{U}(\mathfrak{sl}_{n+1})$. Under the non-degenerate bilinear form $B(X,Y)=\operatorname{tr}(XY)$, the dual basis of $\mathfrak{sl}_{n+1}$ is given by
\begin{gather*}
 e_{lk}^\star=e_{kl}, \quad k\neq l, \qquad h_l^\star=e_{ll}-e_{00}=h_l+\sum_{k=1}^nh_k.
\end{gather*}
The Casimir element $\Omega$ of $\mathcal{U}(\mathfrak{sl}_{n+1})$ is given by
\begin{gather*}
 \Omega=\sum_{0\le k<l\le n}(e_{kl}e_{lk}+e_{lk}e_{kl})+\sum_{0< l\le n} h_lh_l^\star .
\end{gather*}
It is easy to verify that $\Omega$ is self-adjoint, i.e., with the $*$-structure given in~\eqref{eq:1}, $\Omega^*=\Omega$.
Next, define the coproduct for the basis $\{e_{kl}\}_{0\le k\neq l\le n}$ and $\{h_l\}_{0< l\le n}$ as $\Delta(X)=1\otimes X+X\otimes 1$, and define an element
\begin{gather*}
 Y=\Delta(\Omega)-\Omega\otimes 1-1\otimes \Omega.
\end{gather*}

\begin{Lemma}
$\mathcal{L}_{x,y}$ is the image of a self-adjoint element in $\mathcal{U}(\mathfrak{sl}_{n+1})^{\otimes 2}$ under the representation $\rho_{\hat{p}}\otimes\rho_{\hat{p}}$ and $\sigma_p\otimes\sigma_p$. Specifically, there exists a constant $c\in \mathbb{R}$ such that
\begin{align}\label{eq:5}
 \mathcal{L}_{x,y}& =\frac{1}{2}\rho_{\hat{p}}\otimes\rho_{\hat{p}} (Y_{x,y} )-c
\\ \label{eq:5.2}
& = \frac{1}{2}\sigma_p\otimes\sigma_p (Y_{x,y} )-c.
\end{align}
\end{Lemma}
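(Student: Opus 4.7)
The plan is to expand $Y$ on generators, match its image under $\rho_{\hat{p}}\otimes\rho_{\hat{p}}$ with the jump generator $\mathcal{L}_{x,y}$ from~\eqref{eq:6}, and then reduce~\eqref{eq:5.2} to~\eqref{eq:5} via invariance of the Casimir under $\operatorname{Ad}_R$. Using that $\Delta(X)=1\otimes X+X\otimes 1$ on generators extends multiplicatively to $\mathcal{U}(\mathfrak{sl}_{n+1})$, I would expand $\Delta(e_{kl}e_{lk})$ and $\Delta(h_l h_l^\star)$ and subtract the one-sided pieces to obtain
\begin{gather*}
Y=2\sum_{0\le k<l\le n}(e_{kl}\otimes e_{lk}+e_{lk}\otimes e_{kl})+\sum_{0<l\le n}(h_l\otimes h_l^\star+h_l^\star\otimes h_l).
\end{gather*}
Self-adjointness of $Y$ is then immediate from $\Omega^*=\Omega$ together with $\Delta(X)^*=\Delta(X^*)$.

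For the off-diagonal part of~\eqref{eq:5}, applying $\rho_{\hat{p}}\otimes\rho_{\hat{p}}$ at sites $(x,y)$ to $e_{kl}\otimes e_{lk}$ produces the prefactor $\sqrt{\hat{p}_k/\hat{p}_l}\sqrt{\hat{p}_l/\hat{p}_k}=1$, so the image is $\xi_l^x\xi_k^y$ times the configuration shift $f(\xi)\mapsto f(\xi_{l,k}^{x,y})$, and symmetrically for $e_{lk}\otimes e_{kl}$. After halving and summing over $k<l$, this reproduces exactly the jump portion of $\mathcal{L}_{x,y}$. For the diagonal (Cartan) part I compute $\rho_{\hat{p}}(h_l^\star)(\xi)=\xi_l-\xi_0$ from $h_l^\star=h_l+\sum_{k=1}^n h_k$ together with the constraint $\sum_{k=1}^n\xi_k=2j-\xi_0$, while $\rho_{\hat{p}}(h_l)(\xi)=\xi_l-2j/(n+1)$. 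Expanding
\begin{gather*}
\frac{1}{2}\sum_{l=1}^n\Big[(\xi_l^x-\xi_0^x)\Big(\xi_l^y-\tfrac{2j}{n+1}\Big)+\Big(\xi_l^x-\tfrac{2j}{n+1}\Big)(\xi_l^y-\xi_0^y)\Big]
\end{gather*}
and collecting terms using $\sum_{l=1}^n \xi_l^{x/y}=2j-\xi_0^{x/y}$ and $(n+1)\cdot\frac{2j}{n+1}=2j$ reduces the Cartan contribution to $\sum_{l=0}^n\xi_l^x\xi_l^y-\frac{4j^2}{n+1}$. On the other side, the diagonal of $\mathcal{L}_{x,y}$ equals $-\sum_{k\neq l}\xi_l^x\xi_k^y=\sum_{l=0}^n\xi_l^x\xi_l^y-4j^2$. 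Matching the two identifies the additive constant as $c=\frac{4j^2 n}{n+1}$ and proves~\eqref{eq:5}.

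For~\eqref{eq:5.2}, I would observe that the trace form $B(X,Y)=\operatorname{tr}(XY)$ is $\operatorname{Ad}_R$-invariant, since $\operatorname{tr}(R^{-1}XR R^{-1}YR)=\operatorname{tr}(XY)$. Because the Casimir $\Omega$ is canonically defined in terms of any pair of $B$-dual bases, this yields $\operatorname{Ad}_R(\Omega)=\Omega$. Since $\Delta$ is an algebra homomorphism and $\Delta\circ\operatorname{Ad}_R$ agrees with $(\operatorname{Ad}_R\otimes\operatorname{Ad}_R)\circ\Delta$ on Lie algebra generators (hence on all of $\mathcal{U}(\mathfrak{sl}_{n+1})$), I get $(\operatorname{Ad}_R\otimes\operatorname{Ad}_R)(Y_{x,y})=Y_{x,y}$. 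Combined with $\sigma_p=\rho_{\hat{p}}\circ\operatorname{Ad}_R$, this gives $(\sigma_p\otimes\sigma_p)(Y_{x,y})=(\rho_{\hat{p}}\otimes\rho_{\hat{p}})(Y_{x,y})$, so~\eqref{eq:5.2} follows from~\eqref{eq:5} with the same constant $c$.

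The main obstacle is the bookkeeping in the Cartan computation: one has to verify that the apparently asymmetric product $(\xi_l-\xi_0)(\xi_l-\frac{2j}{n+1})$, once symmetrized in the two sites and summed over $l=1,\dots,n$, assembles into the manifestly symmetric bilinear $\sum_{l=0}^n\xi_l^x\xi_l^y$ plus a pure constant, with all spurious $\xi_0^{x/y}$ cross terms canceling. This cancellation is precisely what forces the use of $h_l^\star=h_l+\sum_{k=1}^n h_k$, i.e.\ the dual basis with respect to the trace form, rather than the naive choice $h_l$ itself.
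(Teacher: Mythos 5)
Your proposal is correct and follows essentially the same route as the paper: expand $Y$ via the coproduct, match the off-diagonal terms with the jump part of $\mathcal{L}_{x,y}$ (with the $\hat{p}$-prefactors cancelling), reduce the Cartan part to $\sum_{l}\xi_l^x\xi_l^y$ plus a constant, and deduce \eqref{eq:5.2} from $\operatorname{Ad}_R(\Omega)=\Omega$ and $(\operatorname{Ad}_R\otimes\operatorname{Ad}_R)\circ\Delta=\Delta\circ\operatorname{Ad}_R$. Your only real departure is deriving $\operatorname{Ad}_R(\Omega)=\Omega$ conceptually from the $\operatorname{Ad}_R$-invariance of the trace form and basis-independence of the Casimir, where the paper simply invokes a direct calculation; you also make explicit the constant $c=\frac{4j^2n}{n+1}$, which the paper leaves implicit.
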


\begin{proof}To prove \eqref{eq:5}, we make use of the following identity:
\begin{gather*}
 \sum_{0\le k<l\le n}\xi_k^x\xi_l^y+\xi_k^x\xi_l^y=\bigg(\sum_{0\le l\le n}\xi_l^x\bigg)\bigg(\sum_{0\le l\le n}\xi_l^y\bigg)-\sum_{0\le l \le n} \xi^x_l\xi^y_l=(2j)^2-\sum_{0\le l \le n} \xi^x_l\xi^y_l.
\end{gather*}
Expanding $\Omega$ in $Y$, we have
\begin{gather*}
 Y=2\sum_{0\le k< l\le n} ( e_{lk}\otimes e_{kl}+e_{kl}\otimes e_{lk} )+\sum_{1\le l\le n} (h_l\otimes h_l^{\star}+h_l^{\star}\otimes h_l ).
\end{gather*}
Now we can compute the right-hand side of~\eqref{eq:5} using the above identities and the representation $\rho_{\hat{p}}$ to see that it agrees with $\mathcal{L}_{x,y}$. Note that $\mathcal{L}_{x,y}$ does not depend on~$\hat{p}$ since all terms with~$\hat{p}$ get cancelled.

To prove \eqref{eq:5.2}, it suffices to show $\operatorname{Ad}_R\otimes \operatorname{Ad}_R(Y)=Y$. First, we can check that $\operatorname{Ad}_R(\Omega)=\Omega$ by direct calculation.
Using the fact that $\operatorname{Ad}_R\otimes \operatorname{Ad}_R \circ \Delta=\Delta\circ \operatorname{Ad}_R$, we have $\operatorname{Ad}_R\otimes \operatorname{Ad}_R(Y)\allowbreak =Y$, thus
\begin{gather*}
\sigma_p \otimes\sigma_p(Y_{x,y})= (\rho_{\hat{p}}\otimes\rho_{\hat{p}} )\circ (\operatorname{Ad}_R\otimes \operatorname{Ad}_R ) (Y_{x,y}) =\rho_{\hat{p}}\otimes\rho_{\hat{p}} (Y_{x,y}).\tag*{\qed}
\end{gather*}
 \renewcommand{\qed}{}
\end{proof}

Applying Theorem~\ref{th:2.1} yields the self duality for the multi-species SEP$(2j)$.
\begin{Theorem}
 The multi-species ${\rm SEP}(2j)$ defined in Definition~{\rm \ref{def:3.1}} is self dual with respect to duality functions
\begin{gather*}
 \prod_{x\in G} K\big(\xi^x,\eta^x,\kappa,2j\big),
\end{gather*}
 for any $\kappa\in \mathcal{K}_n$ such that $U$ in $\kappa$ is real and~$p$, $\hat{p}$ in $\kappa$ are probability measures.
\end{Theorem}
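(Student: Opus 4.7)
The plan is to bootstrap from the two-site lemma just proved to the whole graph, then apply the abstract Theorem \ref{th:2.1}. All the technical work has effectively been done: there is nothing left but to assemble the pieces.

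First, I would lift each edge element $Y_{x,y}$ to an element of $U(\mathfrak{sl}_{n+1})^{\otimes L}$ by inserting the two tensor factors in slots $x$ and $y$ and $1$'s elsewhere, and then set
\begin{gather*}
 \tilde{Y} \;=\; \sum_{\{x,y\}\in E}\Bigl(\tfrac{1}{2}Y_{x,y}-c\cdot 1^{\otimes L}\Bigr) \;\in\; U(\mathfrak{sl}_{n+1})^{\otimes L}.
\end{gather*}
By the preceding lemma applied site by site (and using that $\rho_{\hat p}$ and $\sigma_p$ act as the identity on slots where the tensor factor is $1$), we get
\begin{gather*}
 \mathcal{L} \;=\; \sum_{\{x,y\}\in E}\mathcal{L}_{x,y} \;=\; \rho_{\hat p}^{\otimes L}(\tilde Y) \;=\; \sigma_p^{\otimes L}(\tilde Y).
\end{gather*}

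Next I would check that $\tilde Y$ is self-adjoint in $U(\mathfrak{sl}_{n+1})^{\otimes L}$ equipped with the tensor-product $*$-structure inherited from \eqref{eq:1}. The Casimir $\Omega$ is self-adjoint, so $\Delta(\Omega)=1\otimes\Omega+\Omega\otimes 1+Y$ is self-adjoint in $U(\mathfrak{sl}_{n+1})^{\otimes 2}$, and hence so is $Y$; lifting to $U(\mathfrak{sl}_{n+1})^{\otimes L}$ preserves self-adjointness, and the scalar $c$ is real. Thus $\tilde Y^{*}=\tilde Y$, and by Proposition \ref{prop:3.4} both $\rho_{\hat p}^{\otimes L}(\tilde Y)$ and $\sigma_p^{\otimes L}(\tilde Y)$ are self-adjoint operators on the respective Hilbert spaces $l^2(\otimes_G w_{\hat p})$ and $l^2(\otimes_G w_p)$.

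Now I would invoke Theorem \ref{th:2.1}. The local intertwiner $\Lambda$ of the previous proposition has factored kernel $K(\xi^x,\eta^x)$ satisfying the intertwining relation \eqref{eq:3} on each site; taking the $L$-fold tensor product gives a unitary intertwiner $\Lambda^{\otimes L}\colon l^2(\otimes_G w_{\hat p})\to l^2(\otimes_G w_p)$ with kernel $\prod_{x\in G}K(\xi^x,\eta^x)$, and the intertwining relation passes through tensor products. All hypotheses of Theorem \ref{th:2.1} are thereby verified with $L_1=\rho_{\hat p}^{\otimes L}(\tilde Y)=\mathcal{L}$ and $L_2=\sigma_p^{\otimes L}(\tilde Y)=\mathcal{L}$, so the two copies of $\mathcal{L}$ are in duality, i.e., $\mathcal{L}$ is self-dual, with duality function $\prod_{x\in G}K(\xi^x,\eta^x,\kappa,2j)$.

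There is no genuinely hard step here: the real content was packaged into the preceding propositions (existence of the unitary intertwiner, that $\sigma_p$ is a $*$-representation, and that $\tfrac{1}{2}Y-c$ realizes the edge generator under both representations). The only point that requires a moment's care is noting that the additive constant $-c$ on each edge produces the same scalar in both representations, so it does not obstruct the duality; and that the tensor-product intertwiner's kernel is literally the product of the single-site kernels, which is immediate from Fubini.
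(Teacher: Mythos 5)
Your proposal is correct and follows exactly the route the paper intends: the paper's proof consists of the single sentence ``Applying Theorem~\ref{th:2.1} yields the self duality,'' and you have simply filled in the routine assembly — summing the lifted edge elements $\tfrac{1}{2}Y_{x,y}-c$ into a self-adjoint $\tilde Y\in U(\mathfrak{sl}_{n+1})^{\otimes L}$, noting both representations send it to $\mathcal{L}$, and feeding the factorized kernel $K(\xi^x,\eta^x)$ satisfying \eqref{eq:3} into Theorem~\ref{th:2.1}. No gaps; the details you spell out (self-adjointness of $Y$ via the Casimir, reality of $c$, tensoring the single-site intertwiners) are exactly the ones the paper leaves implicit.
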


\section[Multi-species IRW and Heisenberg Lie algebra h\_n]{Multi-species IRW and Heisenberg Lie algebra $\boldsymbol{\mathfrak{h}_n}$}\label{section4}

In this section, we find a family of self-duality functions of the multi-species independent random walk (multi-species IRW) using the Heisenberg Lie algebra $\mathfrak{h}_n$.

The $n$-species independent random walk is a generalization of the usual IRW to~$n$ species on a finite undirected graph $G=(V,E)$, where $V=\{1,\dots,L\}$ with $L\in\mathbb{N}$ and $L>2$ is the set of sites (vertices) and $E$ is the set of edges. It is a Markov process where $n$ species of particles move independently between $L$ sites. The jump rate for a particle of species $i$ from a site is proportional to the number of species $i$ particles at that site.

The state space $\mathcal{S}(n,G)$ of particle configurations consists of variables $\xi=\big(\xi_i^x\colon 1\le i\le n$, $x\in G\big)$, where $\xi_i^x\in\mathbb{N}_0$ (non-negative integers) denotes the number of species~$i$ particles at site~$x$.
\begin{Definition}\label{def:4.1}
The generator of $n$-species IRW on $G=(V,E)$ is given by
\begin{gather*}
 \mathcal{L}f(\xi)=\sum_{\text{edge}\{x,y\}\in E} \mathcal{L}_{x,y},\\
 \mathcal{L}_{x,y}= \sum_{i=1}^n\big [\xi_i^x\big(f\big(\xi_i^{x,y}\big)-f(\xi)\big)+\xi_i^y\big(f\big(\xi_i^{y,x}\big)-f(\xi)\big)\big],
\end{gather*}
where $\xi_i^{x,y}$ denotes the particle configuration obtained by moving a particle of species~$i$ from site~$x$ to site~$y$ if $\xi_i^{x,y}\in\mathcal{S}(n,G)$.
\end{Definition}

Define measure $ \mu_\lambda(\xi)=\prod_{i=1}^n \frac{\lambda^{\xi_i}}{\xi_i!}{\rm e}^{-\lambda}$ with $ \lambda>0$. Following a simple detailed balance computation, we can show that the product measure $\otimes_G\mu_\lambda$ is a reversible measure of the $n$-species IRW when $\lambda$ is the same for all sites.

Next, we mention here that the space $l^2(\mu_\lambda)$ is equipped with inner product
\begin{gather*}
 (f,g)_\lambda=\sum_{\xi\in\mathbb{N}_0^{ n}}\mu_\lambda(\xi)f(\xi)\overline{g(\xi)}.
\end{gather*}

\subsection[The Charlier polynomials and Heisenberg Lie algebra h\_n]{The Charlier polynomials and Heisenberg Lie algebra $\boldsymbol{\mathfrak{h}_n}$}\label{section4.1}

\begin{Definition}
The Heisenberg Lie algebra $\mathfrak{h}_n$ is the $2n+1$ dimensional complex Lie algebra with generators $\{P_1,\dots,P_n,Q_1,\dots,Q_n,Z\}$ and commutation relations: for $1\le i,l \le n$,
\begin{gather*}
 [P_i,P_l]=[Q_i,Q_l]=[P_i,Z]=[Q_i,Z]=0,\qquad
 [P_i,Q_l]=\delta_{i,l}Z.
\end{gather*}
\end{Definition}
The Heisenberg Lie algebra $\mathfrak{h}_n$ is nilpotent but not semisimple. It has a $*$-structure given by
\begin{gather*}
 P_i^*=Q_i,\qquad Q_i^*=P_i,\qquad Z^*=Z.
\end{gather*}

The Charlier polynomials are given by
\begin{gather*}
 C_m(z,\lambda)={}_2F_0\left(\left.\begin{matrix}
 -m,\ -z \\
 -
\end{matrix}\,\right| -\frac{1}{\lambda}\right).
\end{gather*}
Here we list some properties of Charlier polynomials that will be used later on.
First, Charlier polynomials are Orthogonal,
 \begin{gather*}
 \sum_{z\in \mathbb{N}_0}C_m(z,\lambda)C_{\tilde{m}}(z,\lambda)\frac{\lambda^{z}}{z!}{\rm e}^{-\lambda}=\delta_{m,\tilde{m}}\lambda^{-\tilde{m}}\tilde{m}!.
 \end{gather*}
 They have the following raising and lowering property,
 \begin{gather*}
 mC_{m-1}(z,\lambda)=\lambda C_m(z,\lambda)-\lambda C_m(z+1,\lambda),
 \\
 \lambda C_{m+1}(z,\lambda)=\lambda C_m(z,\lambda)-z C_m(z-1,\lambda).
 \end{gather*}

To construct unitary operator later, we define function $C(\xi,\eta,\lambda)$ for $\xi,\eta\in \mathbb{N}_0^{ n}$ by
\begin{gather}\label{eq: 4.9}
 C(\xi,\eta,\lambda)=\prod_{i=1}^n {\rm e}^{\lambda} C_{\xi_i}(\eta_i,\lambda).
\end{gather}

\subsection{Self duality of the multi-species IRW}\label{section4.2}
Now we define the $*$-representation $\rho_\lambda$ of $\mathfrak{h}_n$ on $l^2(\mu_\lambda)$ by
\begin{gather*}
 [\rho_\lambda(Q_i)f](\xi)=\xi_if\big(\xi_i^{-1}\big),\\
 [\rho_\lambda(P_i)f](\xi)=\lambda f\big(\xi_i^{+1}\big),\\
 [\rho_\lambda(Z)f](\xi)=\lambda f(\xi),
\end{gather*}
where $\xi_i^{+1}$ \big($\xi_i^{-1}$\big) means that $\xi_i$ is increased (decreased) by~$1$.

Next, we define the map $\theta$ by
\begin{gather*}
 \theta(P_i)=Z-P_i,\qquad \theta(Q_i)=Z-Q_i,\qquad \theta(Z)=Z,
\end{gather*}
then $\theta$ extends to a Lie algebra isomorphism of $\mathfrak{h}_n$, preserving the $*$-structure.

\begin{Proposition}\label{prop:4.1}
 For any $X\in \mathfrak{h}_n$, we have
 \begin{gather*}
 \rho_\lambda(X^*)C(\cdot,\eta,\lambda)(\xi)=\rho_\lambda(\theta(X))C(\xi,\cdot,\lambda)(\eta).
 \end{gather*}
\end{Proposition}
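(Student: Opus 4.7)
The plan is to reduce the statement to the generators of $\mathfrak{h}_n$ and then verify the identity termwise using the raising and lowering relations for Charlier polynomials. Since both sides of the claimed identity are linear in $X$ and $\mathfrak{h}_n$ is spanned by $\{P_1,\dots,P_n,Q_1,\dots,Q_n,Z\}$, it suffices to check three cases: $X=Q_i$, $X=P_i$, and $X=Z$. The product structure of $C(\xi,\eta,\lambda)=\prod_{i=1}^n e^{\lambda}C_{\xi_i}(\eta_i,\lambda)$ together with the fact that $\rho_\lambda(Q_i)$, $\rho_\lambda(P_i)$ each act only in the $i$-th coordinate will reduce everything to a one-variable identity for Charlier polynomials.

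The case $X=Z$ is immediate: both $Z^*=Z$ and $\theta(Z)=Z$, and $\rho_\lambda(Z)$ acts as multiplication by $\lambda$ in either variable, so the two sides coincide. For $X=Q_i$, the left-hand side is $\rho_\lambda(P_i)C(\cdot,\eta,\lambda)(\xi)=\lambda C(\xi_i^{+1},\eta,\lambda)$, which up to the common factor $e^{n\lambda}\prod_{j\neq i}C_{\xi_j}(\eta_j,\lambda)$ equals $\lambda C_{\xi_i+1}(\eta_i,\lambda)$. The right-hand side is $\rho_\lambda(Z-Q_i)C(\xi,\cdot,\lambda)(\eta)=\lambda C(\xi,\eta,\lambda)-\eta_i C(\xi,\eta_i^{-1},\lambda)$, which collapses to $\lambda C_{\xi_i}(\eta_i,\lambda)-\eta_i C_{\xi_i}(\eta_i-1,\lambda)$ times the same common factor. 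These two expressions match exactly by the second raising/lowering identity $\lambda C_{m+1}(z,\lambda)=\lambda C_m(z,\lambda)-zC_m(z-1,\lambda)$ stated before \eqref{eq: 4.9}, with $m=\xi_i$ and $z=\eta_i$.

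The case $X=P_i$ is entirely symmetric. The left-hand side $\rho_\lambda(Q_i)C(\cdot,\eta,\lambda)(\xi)=\xi_i C(\xi_i^{-1},\eta,\lambda)$ pairs with the right-hand side $\rho_\lambda(Z-P_i)C(\xi,\cdot,\lambda)(\eta)=\lambda C(\xi,\eta,\lambda)-\lambda C(\xi,\eta_i^{+1},\lambda)$, and after dividing by the common product factor the identity becomes the first raising/lowering relation $mC_{m-1}(z,\lambda)=\lambda C_m(z,\lambda)-\lambda C_m(z+1,\lambda)$ at $m=\xi_i$, $z=\eta_i$. This completes the verification on all generators, and hence on all of $\mathfrak{h}_n$.

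There is no real obstacle here; the only thing to be slightly careful about is bookkeeping the factor $e^{n\lambda}$ and the $\prod_{j\neq i}C_{\xi_j}(\eta_j,\lambda)$ that appear unchanged on both sides for the $P_i$ and $Q_i$ cases, so that the identity indeed reduces to the one-variable raising/lowering relations. The conceptual content is that the isomorphism $\theta$ is precisely the twist on $\mathfrak{h}_n$ that intertwines the left and right action of the generators through the Charlier kernel, exactly as needed to then invoke Theorem~\ref{th:2.1}.
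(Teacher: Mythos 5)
Your proof is correct and follows essentially the same route as the paper: verify the identity on the generators $P_i$, $Q_i$ (and trivially $Z$), reducing via the product structure of $C(\xi,\eta,\lambda)$ to the two one-variable raising/lowering relations for Charlier polynomials with $m=\xi_i$, $z=\eta_i$. The only difference is that you spell out the $Z$ case and the cancellation of the common factor $e^{n\lambda}\prod_{j\neq i}C_{\xi_j}(\eta_j,\lambda)$, which the paper leaves implicit.
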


\begin{proof}
It's easy to verify by definition and the raising and lowering property,
\begin{gather*}
 \rho_\lambda(Q_i)C(\cdot,\eta,\lambda)(\xi)=\xi_i C\big(\xi_i^{-1},\eta,\lambda\big)\\
 \hphantom{\rho_\lambda(Q_i)C(\cdot,\eta,\lambda)(\xi)}{} =\lambda C(\xi,\eta,\lambda)-\lambda C\big(\xi,\eta_i^{+1},\lambda\big)
 =\rho_\lambda(\theta(P_i))C(\xi,\cdot,\lambda)(\eta),\\
 \rho_\lambda(P_i)C(\cdot,\eta,\lambda)(\xi)=\lambda C\big(\xi_i^{+1},\eta,\lambda\big)\\
 \hphantom{\rho_\lambda(P_i)C(\cdot,\eta,\lambda)(\xi)}{}
 =\lambda C(\xi,\eta,\lambda)-\eta_i C\big(\xi,\eta_i^{-1},\lambda\big)
 =\rho_\lambda(\theta(Q_i))C(\xi,\cdot,\lambda)(\eta) .\tag*{\qed}
\end{gather*}
\renewcommand{\qed}{}
\end{proof}

\begin{Proposition}
 Define the operator $\Lambda\colon l^2(\mu_\lambda)\xrightarrow[]{} l^2(\mu_\lambda)$ by
\begin{gather*}
 (\Lambda f)(\eta)=\sum_{\xi\in \mathbb{N}_0^{ n}}\mu_\lambda(\xi)f(\xi)C(\xi,\eta,\lambda),
\end{gather*}
then $\Lambda$ is an unitary operator and intertwines $\rho_\lambda$ with $\rho_\lambda\circ\theta$.
\end{Proposition}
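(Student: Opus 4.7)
The plan is twofold: first, derive the intertwining relation $\Lambda\rho_\lambda(X)=\rho_\lambda(\theta(X))\Lambda$ directly from Proposition~\ref{prop:4.1} and the $*$-structure of $\rho_\lambda$; then upgrade this to unitarity by the same cheap-duality argument used in the multi-species SEP$(2j)$ case of Section~\ref{section3.2}.

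For the intertwining, I would insert the definition of $\Lambda$ into $\big(\Lambda\rho_\lambda(X)f\big)(\eta)$ and move the generator onto the kernel. Since the Charlier polynomials are real and the exponential factors in \eqref{eq: 4.9} are real, $C(\cdot,\cdot,\lambda)$ is real-valued, so the $*$-representation property of $\rho_\lambda$ on $l^2(\mu_\lambda)$ yields $\sum_\xi\mu_\lambda(\xi)[\rho_\lambda(X)f](\xi)C(\xi,\eta,\lambda)=\sum_\xi\mu_\lambda(\xi)f(\xi)[\rho_\lambda(X^*)C(\cdot,\eta,\lambda)](\xi)$. By Proposition~\ref{prop:4.1} the right-hand side equals $\sum_\xi\mu_\lambda(\xi)f(\xi)[\rho_\lambda(\theta(X))C(\xi,\cdot,\lambda)](\eta)$, and since $\rho_\lambda(\theta(X))$ acts only on the $\eta$-variable, it commutes with the $\xi$-sum and can be pulled out, producing $[\rho_\lambda(\theta(X))\Lambda f](\eta)$. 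This gives $\Lambda\rho_\lambda(X)=\rho_\lambda(\theta(X))\Lambda$ for each generator, hence for all of $\mathfrak{h}_n$.

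For unitarity, I would reuse the trick from the SEP$(2j)$ proof. The multi-species IRW admits $\mu_\lambda$ as a reversible measure, so the cheap duality function $\delta_\zeta(\xi)=\delta_{\zeta,\xi}/\mu_\lambda(\xi)$ gives, as $\zeta$ varies over $\mathbb{N}_0^n$, an orthogonal basis of $l^2(\mu_\lambda)$ with squared norm $1/\mu_\lambda(\zeta)$. Plugging $\delta_\zeta$ into $\Lambda$ collapses the sum to $\Lambda(\delta_\zeta)(\eta)=C(\zeta,\eta,\lambda)$. Because both $\mu_\lambda$ and $C$ factor as products over the $n$ species, the one-variable Charlier orthogonality from Section~\ref{section4.1} applies coordinate-wise and shows that $\{C(\zeta,\cdot,\lambda)\}_\zeta$ is orthogonal in $l^2(\mu_\lambda)$ with squared norm matching $1/\mu_\lambda(\zeta)$. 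Thus $\Lambda$ sends an orthogonal basis to an orthogonal basis preserving norms and is therefore unitary.

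The only delicate point is the bookkeeping of the exponential prefactors in \eqref{eq: 4.9} against the $e^{-\lambda}$ weights in $\mu_\lambda$, so that the norm of $C(\zeta,\cdot,\lambda)$ in $l^2(\mu_\lambda)$ lands exactly on $1/\mu_\lambda(\zeta)$; once the product structure reduces this to the single-variable Charlier orthogonality relation, everything is routine, and this is precisely why the $e^\lambda$ factors were built into the definition of $C$ to begin with.
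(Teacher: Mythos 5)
Your proof follows the paper's argument essentially verbatim: the intertwining relation is read off from Proposition~\ref{prop:4.1} exactly as the paper does (it just says ``it follows directly''), and unitarity is obtained by the same device of mapping the cheap-duality orthogonal basis $\delta_\zeta(\xi)=\delta_{\zeta,\xi}/\mu_\lambda(\xi)$ to $C(\zeta,\cdot,\lambda)$ and matching squared norms via the one-variable Charlier orthogonality. One caveat on the ``delicate point'' you flag: with the prefactor ${\rm e}^{\lambda}$ per species in \eqref{eq: 4.9}, the squared norm of $C(\zeta,\cdot,\lambda)$ in $l^2(\mu_\lambda)$ works out to ${\rm e}^{n\lambda}/\mu_\lambda(\zeta)$ rather than $1/\mu_\lambda(\zeta)$ (one would need ${\rm e}^{\lambda/2}$ per factor to land exactly), so $\Lambda$ as written is a positive constant multiple of a unitary --- a harmless normalization slip that is present in the paper's own proof as well and does not affect the duality statement.
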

\begin{proof}
It follows directly from Proposition~\ref{prop:4.1} that $\Lambda[\rho_\lambda(X)f]=\rho_\lambda\circ\theta(X)\Lambda(f)$ for all $X\in\mathfrak{h}_n$, thus $\Lambda$ intertwines $\rho_{\lambda}$ with $\rho_\lambda\circ\theta$. The cheap duality functions for the $n$-species IRW given by $\delta_{\zeta}(\xi)=\frac{\delta_{\zeta,\xi}}{\mu_\lambda(\xi)}$ form an orthogonal basis for $l^2(\mu_\lambda)$ with squared norm $\frac{1}{\mu_\lambda(\zeta)}$, while $\Lambda(\delta_\zeta)(\eta)=C(\zeta,\eta,\lambda)$ also has squared norm $\frac{1}{\mu_\lambda(\zeta)}$ in $l^2(\mu_\lambda)$. By the fact that all $C(\zeta,\eta,\lambda)$ form an orthogonal basis for $l^2(\mu_\lambda)$, $\Lambda$ is unitary.
\end{proof}

Finally, to show self duality, we define $Y\in \mathcal{U}(\mathfrak{h}_n)^{\otimes 2}$ by
\begin{gather*}
 Y=\sum_{i=1}^n (1\otimes Q_i-Q_i\otimes 1)(P_i\otimes 1-1\otimes P_i).
\end{gather*}

\begin{Lemma} The generator for the multi-species IRW can be written as the following:
\begin{align}\label{eq:7}
 \mathcal{L}_{x,y}& =\lambda^{-1} \rho_\lambda\otimes\rho_\lambda (Y_{x,y})
\\ \label{eq:8}
 & =\lambda^{-1} (\rho_\lambda\circ\theta)\otimes(\rho_\lambda \circ\theta)(Y_{x,y}).
\end{align}
\end{Lemma}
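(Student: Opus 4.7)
The plan is to establish \eqref{eq:7} by a direct term-by-term expansion, and then derive \eqref{eq:8} from \eqref{eq:7} via a sign-cancellation argument that exploits the fact that $\rho_\lambda$ sends the central element $Z$ to the scalar $\lambda\operatorname{id}$.

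For \eqref{eq:7}, I would write $Y = \sum_{i=1}^n A_i B_i$ with $A_i = 1\otimes Q_i - Q_i \otimes 1$ and $B_i = P_i \otimes 1 - 1 \otimes P_i$, and expand each product as
\[
A_i B_i = P_i \otimes Q_i + Q_i \otimes P_i - 1 \otimes Q_i P_i - Q_i P_i \otimes 1,
\]
which uses only that the two tensor slots commute. Applying $\rho_\lambda \otimes \rho_\lambda$ at sites $(x,y)$ and inserting the representation formulas, the two off-diagonal terms produce the hopping contributions $\lambda \xi_i^y f(\xi_i^{y,x})$ and $\lambda \xi_i^x f(\xi_i^{x,y})$, corresponding respectively to raising at $x$ while lowering at $y$, and vice versa. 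For the diagonal terms, a short check gives $\rho_\lambda(Q_i P_i) g(\eta) = \lambda \eta_i g(\eta)$ (raising first and then lowering returns to $\eta$ with a multiplicative factor of $\eta_i$), so these contribute $-\lambda \xi_i^x f(\xi) - \lambda \xi_i^y f(\xi)$. Summing over $i$ and dividing by $\lambda$ reproduces $\mathcal{L}_{x,y}$ from Definition~\ref{def:4.1} exactly.

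For \eqref{eq:8}, the key observation is that since $\rho_\lambda(Z) = \lambda\operatorname{id}$, we have $\rho_\lambda(\theta(P_i)) = \lambda\operatorname{id} - \rho_\lambda(P_i)$ and $\rho_\lambda(\theta(Q_i)) = \lambda\operatorname{id} - \rho_\lambda(Q_i)$. Because $A_i$ and $B_i$ are each antisymmetric differences between the two tensor slots, the scalar contributions $\lambda\operatorname{id}$ cancel when $(\rho_\lambda\circ\theta)\otimes(\rho_\lambda\circ\theta)$ is applied, leaving
\[
(\rho_\lambda\circ\theta)\otimes(\rho_\lambda\circ\theta)(A_i) = -(\rho_\lambda\otimes\rho_\lambda)(A_i), \qquad (\rho_\lambda\circ\theta)\otimes(\rho_\lambda\circ\theta)(B_i) = -(\rho_\lambda\otimes\rho_\lambda)(B_i).
\]
Since $\theta$ extends to an algebra automorphism of $\mathcal{U}(\mathfrak{h}_n)$ and $\rho_\lambda\otimes\rho_\lambda$ is an algebra homomorphism, both maps are homomorphisms on $\mathcal{U}(\mathfrak{h}_n)^{\otimes 2}$, so the two minus signs cancel in the product $A_i B_i$. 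Summing over $i$ yields $(\rho_\lambda\circ\theta)\otimes(\rho_\lambda\circ\theta)(Y_{x,y}) = \rho_\lambda\otimes\rho_\lambda(Y_{x,y})$, and \eqref{eq:8} follows from \eqref{eq:7}.

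The main obstacle is purely bookkeeping: verifying that $\xi_i^{y,x}$ corresponds to the move produced by $P_i \otimes Q_i$ and $\xi_i^{x,y}$ to that produced by $Q_i \otimes P_i$, and that the $Q_i P_i$ ordering (as opposed to $P_i Q_i$, which differs by the central element $Z$) is what yields the clean scalar-multiplication operator $\lambda \eta_i$. Once these conventions are pinned down, no deeper difficulty remains.
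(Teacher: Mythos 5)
Your proof is correct and takes essentially the same route as the paper: \eqref{eq:7} by expanding $Y=\sum_i(1\otimes Q_i-Q_i\otimes 1)(P_i\otimes 1-1\otimes P_i)$ under $\rho_\lambda\otimes\rho_\lambda$ and matching terms with Definition~\ref{def:4.1}, and \eqref{eq:8} by establishing $(\rho_\lambda\circ\theta)\otimes(\rho_\lambda\circ\theta)(Y_{x,y})=\rho_\lambda\otimes\rho_\lambda(Y_{x,y})$. The only difference is that the paper outsources this last identity to \cite[Lemma~3.5]{Groenevelt_2018}, whereas you prove it directly via the sign cancellation from $\rho_\lambda(Z)=\lambda\,\mathrm{id}$ applied to the antisymmetric factors; both your expansion and that cancellation argument check out.
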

\begin{proof}
 \eqref{eq:7} is obtained by plugging in definitions, and to prove \eqref{eq:8}, we show that $(\rho_\lambda\circ\theta)\otimes(\rho_\lambda \circ\theta)(Y)=\rho_\lambda\otimes\rho_\lambda (Y)$, which follows from \cite[Lemma~3.5]{Groenevelt_2018}.
\end{proof}

Again, applying Theorem~\ref{th:2.1}, we obtain the self duality for the multi-species IRW.
\begin{Theorem}
The multi-species IRW defined in Definition~{\rm \ref{def:4.1}} is self dual with respect to duality functions
\begin{gather}\label{eq: 4.20}
 \prod_{x\in G} C\big(\xi^x,\eta^x,\lambda\big),\qquad \lambda>0.
\end{gather}
\end{Theorem}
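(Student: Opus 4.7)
The plan is to invoke Theorem~\ref{th:2.1} with $\mathfrak{g}=\mathfrak{h}_n$, with both $\rho$ and $\sigma$ realized as $*$-representations on $l^2(\mu_\lambda)^{\otimes L}$. Specifically, take $\rho_x=\rho_\lambda$ for every site $x$, and take $\sigma_x=\rho_\lambda\circ\theta$, which is a $*$-representation on the same Hilbert space because $\theta$ is a Lie algebra isomorphism preserving the $*$-structure. The single-site kernel will be $K_x(\xi^x,\eta^x)=C(\xi^x,\eta^x,\lambda)$, and the required intertwining identity $[\rho_\lambda(X^*)K_x(\cdot,\eta)](\xi)=[(\rho_\lambda\circ\theta)(X)K_x(\xi,\cdot)](\eta)$ is exactly Proposition~\ref{prop:4.1}. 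The unitary intertwiner is the operator $\Lambda$ already constructed.

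Next I would lift the edge-local element $Y$ of $\mathcal{U}(\mathfrak{h}_n)^{\otimes 2}$ to the whole graph: for each edge $\{x,y\}\in E$ let $\iota_{x,y}$ denote the embedding that places $Y$ in tensor positions $x,y$ and $1$ everywhere else, and set
\begin{gather*}
 Y_{\mathrm{tot}}=\lambda^{-1}\sum_{\{x,y\}\in E}\iota_{x,y}(Y)\in\mathcal{U}(\mathfrak{h}_n)^{\otimes L}.
\end{gather*}
A short computation using $P_i^*=Q_i$, $Q_i^*=P_i$ and $(A\otimes B)^*=A^*\otimes B^*$ shows $Y=Y^*$ in $\mathcal{U}(\mathfrak{h}_n)^{\otimes 2}$, so $Y_{\mathrm{tot}}$ is self-adjoint. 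Applying the preceding Lemma edge by edge gives simultaneously
\begin{gather*}
 \rho_\lambda^{\otimes L}(Y_{\mathrm{tot}})=\mathcal{L}=(\rho_\lambda\circ\theta)^{\otimes L}(Y_{\mathrm{tot}}),
\end{gather*}
and $\mathcal{L}$ is self-adjoint on $l^2(\mu_\lambda^{\otimes L})$ because $\otimes_G\mu_\lambda$ is reversible.

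With these ingredients in place, Theorem~\ref{th:2.1} applies verbatim and produces the duality function $\prod_{x\in G}C(\xi^x,\eta^x,\lambda)$ as in \eqref{eq: 4.20}, thereby giving self-duality. The only real wrinkle is keeping track of the two roles played by $\mathcal{L}$: although the \emph{operator} is identical on the two sides, the abstract framework of Theorem~\ref{th:2.1} compares the representations $\rho_\lambda^{\otimes L}$ and $(\rho_\lambda\circ\theta)^{\otimes L}$, and the kernel $C$ is what transports between them. I expect the main subtlety to be precisely this bookkeeping, in particular verifying that the two presentations of $\mathcal{L}$ given by the Lemma are compatible when summed over all edges and that self-adjointness of $Y_{\mathrm{tot}}$ lifts from the edge identity $Y=Y^*$; everything else is a direct translation of the Charlier raising/lowering relations into the Theorem~\ref{th:2.1} template.
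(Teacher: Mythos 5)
Your proposal is correct and follows exactly the paper's route: the paper also proves this theorem by combining Proposition~\ref{prop:4.1}, the unitarity of $\Lambda$, and the Lemma expressing $\mathcal{L}_{x,y}$ as $\lambda^{-1}\rho_\lambda\otimes\rho_\lambda(Y_{x,y})=\lambda^{-1}(\rho_\lambda\circ\theta)\otimes(\rho_\lambda\circ\theta)(Y_{x,y})$, and then invoking Theorem~\ref{th:2.1}. The extra details you supply (self-adjointness of $Y$ and of the edge-summed element, and the bookkeeping between the two representations) are correct and simply make explicit what the paper leaves implicit.
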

\begin{Remark}
These duality functions could be obtained by the independence of the evolution of each species of particles and the fact that the duality functions given by \eqref{eq: 4.9} and~\eqref{eq: 4.20} suitably factorize over
species.
\end{Remark}

\subsection*{Acknowledgments}
The author is very grateful to Jeffrey Kuan and anonymous referees for helpful discussions and insightful comments.

\pdfbookmark[1]{References}{ref}
\LastPageEnding

\end{document}